\documentclass[11pt,reqno]{amsart}
\usepackage{amsthm, amssymb, amsmath, amsfonts, mathtools, mathrsfs}
\usepackage{enumitem}
\usepackage{lineno, tipa, color}
\usepackage{pst-node}
\usepackage{tikz-cd} 
\usepackage{ dsfont }

\textheight=8.9in \textwidth=6.2in \oddsidemargin=0.25cm
\evensidemargin=0.28cm \topmargin=.2cm

\raggedbottom
\allowdisplaybreaks
\newtheorem{theorem}{Theorem}
\newtheorem{lemma}[theorem]{Lemma}

\newtheorem{proposition}[theorem]{Proposition}

\newtheorem{definition}[theorem]{Definition}

\newtheorem{remark}[theorem]{Remark}

\theoremstyle{definition}


\newcommand{\m}{\mathfrak m}
\newcommand{\p}{\mathfrak p}

\def\N{{\mathcal N}}
\def\R{{\mathcal R}}

\def\I{{\mathcal I}}

\DeclareMathOperator{\h}{H}
\DeclareMathOperator{\lh}{h}
\DeclareMathOperator{\depth}{depth}


\DeclareMathOperator{\supp}{Supp}

 \numberwithin{theorem}{section}

\title[Buchsbaumness of the associated graded ring]
{Buchsbaumness of the associated graded rings of filtration}

\date{\today}

 \thanks{The author was supported by NBHM-DAE funded by Govt. of India and Chennai Mathematical Institute}

\keywords{Buchsbaum modules, $\mathds{I}$-invariant, local cohomology, d-sequence, Koszul homology.}

\subjclass[2010]{Primary: 13H10, 13A30, Secondary: 13D45}

\author[Kumari Saloni] {Kumari Saloni}
\address{Department of Mathematics, Indian Institute of Technology Patna, Bihta, Patna 801106, India}
\email{ksaloni@iitp.ac.in, sin.saloni@gmail.com}

\begin{document}
	\maketitle
\sloppy
\begin{abstract}
	Let $(A,\m)$ be a Noetherian local ring of dimension $d>0$ and $I$ an $\m$-primary ideal of $A$. In this paper, we discuss a sufficient condition, 
	for the Buchsbaumness of the local ring $A$ to be passed onto the associated graded ring of filtration. Let $\I$ denote an $I$-good filtration. 
	We prove that if $A$ is Buchsbaum and 
	the $\mathds{I}$-invariant, $\mathds{I}(A)$ and $\mathds{I}(G(\I))$, coincide then the associated graded ring $G(\I)$ is Buchsbaum. As an application
	of our result, we indicate an alternative proof of a conjecture, of Corso on certain boundary conditions for Hilbert coefficients.
\end{abstract}
\section{Introduction}\label{section-intro}
The purpose of this paper is to examine the Buchsbaum property of associated graded rings of filtration. Let $(A,\m)$ be a Noetherian local ring of dimension
$d>0$ with maximal
ideal $\m$ and $M$  a finite $A$-module. We say that $M$ is a {\textit{Buchsbaum module}} if the invariant $\ell_A(M/QM)-e(Q,M)$ is independent of the choice of parameter ideals
$Q$ of $M$, where $\ell_A(M/QM)$ and $e(Q,M)$ denote the length of $M/QM$ and the multiplicity of $M$ relative to $Q$, respectively. 
Note that a Cohen-Macaulay module is Buchsbaum with $\ell_A(M/QM)-e(Q,M)=0$ for 
every parameter ideal $Q$ and the converse also holds. Let $I$ be an $\m$-primary ideal of $A$ and $R(I)=\mathop\oplus\limits_{n\geq 0}I^n$ the Rees algebra of $I$. The associated 
graded ring $G(I)=\mathop\oplus\limits_{n\geq0} I^n/I^{n+1}$ is called 
Buchsbaum if the localization $G(I)_{\N}$ is Buchsbaum as an $R(I)_{\N}$-module where $\N$ is the unique homogeneous maximal ideal of $R(I)$.

The structural properties of the local ring and the associated graded ring are deeply connected to each other. 
Many important properties, such as reducedness, normality, Cohen-Macaulayness etc., are inherited by the local ring $A$ from the associated graded ring 
 (however, this is not true for Buchsbaum rings, see \cite[Example 4.10]{G84} for a counter example).
We are interested  in the inheritance of such properties while passing from $A$ to $G(I)$ which is important  in the theory of blowing-up rings.
In this process, many good properties are lost.   
A great deal of research has been focused on studying the cases where this loss could be avoided. For example, if $A$ is Cohen-Macaulay 
and the reduction number of $I$ is at most one, then $G(I)$ is Cohen-Macaulay. Since the notion of Buchsbaum modules is a generalization of 
Cohen-Macaulay modules, an interesting problem is to study the cases when the associated graded ring 
of a Buchsbaum local ring is Buchsbaum. 

Now suppose that $A$ is a Buchsbaum local ring. Goto proved the following results: (i) if $A$ has maximal embedding dimension, then $G(\m)$ is Buchsbaum
\cite{G82}, (ii) if  $\dim A\geq 2$, $\depth A>0$ and $A$ has multiplicity $2$, then $G(\m)$ is Buchsbaum \cite{goto82mult2} and (iii) 
 $G(Q)$ is Buchsbaum  for  a parameter ideal $Q$ \cite{G83}. 
 Certain boundary conditions on the Hilbert coefficients $e_i(I)$ are also expected to force Buchsbaumness of $G(I)$ which we discuss briefly at the end 
 of the paper, see \cite{corso, GN03}. 
 Furthermore, the $\m$-primary ideals with reduction number one exhibit nice 
 properties. Many authors have investigated the Buchsbaum properties of associated graded rings of such ideals, see \cite{G84, GN03, N98}. 
 Nakamura in \cite{N98} proved the following result which generalizes Goto's result, mentioned above, for parameter ideals.  
If $I^2=QI$ for some minimal reduction $Q=(a_1,\ldots,a_d)$ of $I$, then $G(I)$ is Buchsbaum if and only if the equality 
$(a_1^2,\ldots,a_d^2)\cap I^n=(a_1^2,\ldots,a_d^2)I^{n-2}$ holds for $3\leq n\leq d+1.$ In most of the cases, when $G(I)$ is Buchsbaum, 
the $\mathds{I}$-invariant of $G(I)$ coincides with that of $A$. Recall that the {\textit{$\mathds{I}$-invariant}} of an $A$-module $M$ of dimension $s$ is 
\begin{align}\label{intro-eq1} \mathds{I}(M)= \mathop\sum_{i=0}^{s-1}\binom{s-1}{i}\lh^i(M),\end{align}
where $\lh^i(M)=\ell_A(\h_{\m}^i(M))$ and $\h_{\m}^i(M)$ is the local cohomology module of $M$ with support in the maximal ideal $\m$. 
Indeed, Yamagishi \cite{Yam1,Yam2} removed the condition on the reduction number of $I$ and  proved that  $G(I)$ is Buchsbaum and 
$\lh^i(G(I))=\lh^i(A)$ for $0\leq i< d$ if and only if $\mathds{I}(G(I))=\mathds{I}(A)$. Here, 
$\lh^i(G(I))=\ell(\h_{\N}^i(G(I))$ is the length of the local cohomology module of $G(I)$ with support in $\N$ and 
$\mathds{I}(G(I))=\mathop\sum\limits_{i=0}^{s-1}\binom{d-1}{i}\lh^i(G(I))$. See Section \ref{section-pre} for notations. 

We extend the above theory to the framework of good filtration. An {\textit{$I$-good filtration}} of $A$ is a sequence of ideals $\I=\{I_n\}_{n\geq 0}$ such that $I_{n+1}\subseteq I_{n}$, 
$II_n\subseteq I_{n+1}$ for all $n>0$ and $I_{n+1}=II_n$ for all $n\gg 0$, see \cite{rossi-valla}.
If $A$ is analytically unramified then the filtration of integral closures and the tight closures (provided $A$ contains a field) of powers of $I$ are 
$I$-good filtration. Thus the study of good filtration is in itself interesting. Moreover, many results in literature concerning $I$-adic 
filtration $\{I^n\}_{n\geq 0}$ require non adic-filtration in their proofs such as Ratliff-Rush filtration and the filtration $\{QI^n\}$ with $Q$ a reduction of $I$ etc.
 For instance, see the method of proof for the bound on reduction number by Rossi in \cite{rossi}. 
 Another interesting reason to consider this general set up of $I$-good filtration is its application 
in the study of other graded algebras such as Sally modules, fiber cones and symmetric algebras. 
For instance, Rossi and Valla in \cite{rossi-valla}, have developed very elegant general methods for the above graded algebras using the theory of $I$-good 
filtration. 

 Let $\I=\{I_n\}$ be an $I$-good filtration of $A$. A {\textit{reduction of $\I$}} is an ideal $Q\subseteq I$ such that 
 $I_{n+1}=QI_n$ for $n\gg 0$. Let $R(\I)=\mathop\oplus\limits_{n\geq 0}I_n$ and $G(\I)=\mathop\oplus\limits_{n\geq 0}I_n/I_{n+1}$ denote the 
 Rees algebra and the associated graded ring of $\I$. We write $\N$ for the unique homogeneous maximal ideal $\m R(\I)+R(\I)_+$ of $R(\I)$. As earlier, $\lh^i(A)$ (resp. $\lh^i(G(\I))$) 
denotes the length of the local cohomology modules $\h^i_\m(A)$ of $A$ with support in $\m$ (resp. $\h^i_{\N}(G(\I))$ of $G(\I)$ with support in $\N$).  We say that
$G(\I)$ is Buchsbaum (resp. quasi-Buchsbaum) over $R(\I)$ if 
$G(\I)_{\N}$ is Buchsbaum (resp. quasi-Buchsbaum) over $R(I)_{\N}$. The $\mathds{I}$-invariant of $G(\I)$ can be defined analogous to \eqref{intro-eq1}. 
With this setting, we prove the following results in this paper. 
\begin{theorem}\label{theorem-intro-1}
 Let $A$ be a quasi-Buchsbaum local ring and suppose that the equality $\mathds{I}(G(\I))=\mathds{I}(A)$ holds. Then $G(\I)$ is quasi-Buchsbaum over $R(\I)$.
\end{theorem}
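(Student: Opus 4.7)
The strategy is to compare the graded local cohomology of $G(\I)$ with the local cohomology of $A$ through a Sancho de Salas-type exact sequence adapted to the $I$-good filtration $\I$, and then exploit the $\mathds{I}$-invariant equality to force the quasi-Buchsbaum property to propagate from $A$ to $G(\I)$. A basic observation is that, writing $\N = \m R(\I) + R(\I)_+$, the requirement $\N H^i_\N(G(\I)) = 0$ splits into two independent conditions, $\m H^i_\N(G(\I)) = 0$ and $R(\I)_+ H^i_\N(G(\I)) = 0$, the second of which holds automatically whenever $H^i_\N(G(\I))$ is concentrated in a single graded degree.

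The first step is to set up (or invoke) a long exact sequence of graded modules connecting the degree-$n$ pieces of $H^j_\N(G(\I))$ and $H^j_\N(R(\I))$ with $H^i_\m(A)$ and $H^i_\m(A/I_n)$. Since every quotient $A/I_n$ has finite length, its higher local cohomology vanishes, which collapses the sequence into a tractable form. Summing lengths over all degrees $n \in \mathbb{Z}$ and all $i < d$ with the binomial weights appearing in the definition of the $\mathds{I}$-invariant, I expect to extract the inequality
\[
\mathds{I}(G(\I)) \geq \mathds{I}(A),
\]
together with finer termwise length comparisons. The hypothesis $\mathds{I}(G(\I)) = \mathds{I}(A)$ then forces this inequality and each intermediate termwise inequality to be an equality, since all weights in the $\mathds{I}$-invariant are strictly positive.

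The equality case should translate into natural isomorphisms $H^i_\m(A) \xrightarrow{\sim} [H^i_\N(G(\I))]_0$ for $i < d$, together with the vanishing $[H^i_\N(G(\I))]_n = 0$ for all $n \neq 0$ and all $i < d$. The vanishing in nonzero degrees immediately yields $R(\I)_+ H^i_\N(G(\I)) = 0$. The isomorphism in degree zero, coupled with the quasi-Buchsbaum hypothesis $\m H^i_\m(A) = 0$, yields $\m H^i_\N(G(\I)) = 0$. Combining the two gives $\N H^i_\N(G(\I)) = 0$ for every $i < d$, which is precisely the quasi-Buchsbaum property of $G(\I)$.

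The main obstacle is the first step: rigorously establishing the Sancho de Salas sequence in the generality of $I$-good filtrations (where one does not have a convenient principal isomorphism $R(\I)(-1) \to R(\I)_+$ as in the $I$-adic case) and extracting from it the precise inequality $\mathds{I}(G(\I)) \geq \mathds{I}(A)$ with a clean equality criterion. A further delicate point will be ensuring that the equality of $\mathds{I}$-invariants forces \emph{module-level} identifications in the exact sequence rather than merely equalities of lengths, since the transfer of the quasi-Buchsbaum property depends on the $\N$-action on $H^i_\N(G(\I))$ and not just on its length. Once this structural refinement is in place, the final deduction of $\N H^i_\N(G(\I)) = 0$ is essentially bookkeeping.
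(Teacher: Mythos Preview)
Your approach is genuinely different from the paper's, but it contains a structural error. The claim that the equality $\mathds{I}(G(\I))=\mathds{I}(A)$ forces $[\h^i_\N(G(\I))]_n=0$ for $n\neq 0$ together with $[\h^i_\N(G(\I))]_0\cong \h^i_\m(A)$ is false already for $i=0$. Under the equi-$\mathds{I}$-invariant hypothesis one has $\h^0_\N(G(\I))=U^*$ with $[U^*]_n\cong (U\cap I_n)/(U\cap I_{n+1})$ where $U=\h^0_\m(A)$; for $A=k[[x,y]]/(x^2,xy)$ with the $\m$-adic filtration one finds $U=(x)\subseteq\m$, so $[U^*]_0=U/(U\cap\m)=0$ while $[U^*]_1=(U\cap\m)/(U\cap\m^2)\cong k$. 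Thus neither concentration in degree $0$ nor the degree-zero isomorphism with $\h^i_\m(A)$ holds, and your mechanism for obtaining $R(\I)_+\,\h^i_\N(G(\I))=0$ collapses. More broadly, a Sancho de Salas-type sequence computes each graded piece $[\h^i_\N(G(\I))]_n$ separately and carries no direct information about the $R(\I)_+$-action linking distinct degrees, so even a corrected length bookkeeping would not by itself deliver $\N\,\h^i_\N(G(\I))=0$; you acknowledge this ``module-level versus length-level'' issue at the end, but it is not a detail to be cleaned up afterwards---it is the entire content of the theorem.

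The paper bypasses the graded structure of local cohomology and uses the weak-sequence characterization of quasi-Buchsbaumness instead. Choosing a reduction $(a_1,\ldots,a_d)\subseteq I_1$, it verifies directly that $(a_1t)^2,\ldots,(a_dt)^2$ is a weak $G(\I)_\N$-sequence: given a homogeneous $g\in G(\I)$ with $(a_it)^2 g\in((a_1t)^2,\ldots,(a_{i-1}t)^2)G(\I)$, one lifts to $A$, uses that $a_1^2,\ldots,a_d^2$ is a weak $A$-sequence (this is where quasi-Buchsbaumness of $A$ enters), and then descends back to $G(\I)$ via the intersection equalities $(a_1^2,\ldots,a_j^2)\cap I_n=(a_1^2,\ldots,a_j^2)I_{n-2}$, which the paper shows are equivalent to $\mathds{I}(G(\I))=\mathds{I}(A)$. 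This elementwise argument is what makes the filtration case go through without any control on where $\h^i_\N(G(\I))$ lives.
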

\begin{theorem}\label{theorem-intro-2}
 Let $A$ be a Buchsbaum local ring of dimension $d>0$.
 Let $Q=(a_1,\ldots,a_d)\subseteq I_1$ be a reduction of $\I$ and $r\geq 0$ be an integer such that $I_{n+1}=QI_n$ for all $n\geq r$. 
 Then the following statements are equivalent. 
 \begin{enumerate}
  \item \label{main-theorem-item-1} $G(\I)$ is a Buchsbaum $R(\I)$-module and $\lh^i(G(\I))=\lh^i(A)$ for $0\leq i<d$. 
  \item \label{main-theorem-item-2} The equality $\mathds{I}(G(\I))=\mathds{I}(A)$ holds. 
  \item \label{main-theorem-item-3} The equality $(a_1^{2},\ldots,a_d^{2})\cap I_n=(a_1^{2},\ldots,a_d^{2})I_{n-2}$  holds for all $2< n\leq d+r$.
 \end{enumerate}
\end{theorem}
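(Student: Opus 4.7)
The plan is to establish the cyclic chain $(1) \Rightarrow (2) \Rightarrow (1)$ together with the equivalence $(1) \Leftrightarrow (3)$. The implication $(1) \Rightarrow (2)$ is immediate: the equalities $\lh^i(G(\I)) = \lh^i(A)$ for $0 \leq i < d$ plug directly into the definition $\mathds{I}(G(\I)) = \sum_{i=0}^{d-1}\binom{d-1}{i}\lh^i(G(\I))$ to yield $\mathds{I}(A)$. The substantive work lies in the other two assertions.

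For $(2) \Rightarrow (1)$, I would first invoke Theorem \ref{theorem-intro-1}: a Buchsbaum ring is quasi-Buchsbaum, so the hypothesis $\mathds{I}(G(\I)) = \mathds{I}(A)$ already forces $G(\I)$ to be quasi-Buchsbaum, i.e. $\N \cdot \h^i_{\N}(G(\I)) = 0$ for every $i < d$. To promote this to Buchsbaumness I would use the St\"uckrad--Vogel-type characterization: a quasi-Buchsbaum module $M$ of dimension $s$ is Buchsbaum iff $\mathds{I}(M) = \ell_A(M/qM) - e(q,M)$ for some (equivalently every) parameter ideal $q$ of $M$. Applying this to $G(\I)$ with $q = Q^{*} := (a_1^{*}, \ldots, a_d^{*})$, I would compute $\ell(G(\I)/Q^{*} G(\I)) - e(Q^{*}, G(\I))$ and identify it with $\ell(A/QA) - e(Q,A)$ via a degree-by-degree filtration argument combined with the equality $e_0(\I) = e(Q,A)$; since $A$ is Buchsbaum this last quantity equals $\mathds{I}(A) = \mathds{I}(G(\I))$, completing the upgrade. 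Once Buchsbaumness of $G(\I)$ is in hand, the pointwise equalities $\lh^i(G(\I)) = \lh^i(A)$ for $i < d$ should drop out of the $\mathds{I}$-invariant equality together with a term-by-term comparison between $\h^i_\m(A)$ and $\h^i_\N(G(\I))$ using the exact sequences relating $R(\I)$, $G(\I)$ and $A$.

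The equivalence $(1) \Leftrightarrow (3)$ is the filtration analog of Nakamura's theorem \cite{N98}; my plan is to translate Buchsbaumness of $G(\I)$ into $\N$-annihilation of the Koszul homology of the system of parameters $a_1^{*}, \ldots, a_d^{*}$ acting on $G(\I)$, and then unwind these conditions grade by grade. The intersection identity $(a_1^{2},\ldots,a_d^{2}) \cap I_n = (a_1^{2},\ldots,a_d^{2}) I_{n-2}$ should emerge as the precise requirement that the relevant Koszul cycles in internal degree $n$ be boundaries. The cutoff $n \leq d+r$ is natural because the stability $I_{n+1} = QI_n$ for $n \geq r$ propagates along the $d$-step Koszul complex, after which the required vanishing becomes automatic; specialising to $\I = \{I^n\}$ with $r = 1$ recovers Nakamura's range $3 \leq n \leq d+1$.

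The most delicate step, I expect, will be the direction $(3) \Rightarrow (1)$: from only finitely many intersection equalities one must produce the full Buchsbaum structure in all internal degrees simultaneously. The argument should proceed by induction on internal degree, with the hypothesis of (3) covering the transient range before the filtration stabilises and the stability identity $I_{n+1} = QI_n$ propelling the induction beyond $d+r$. Ensuring that the bound $d+r$ is exactly tight --- neither losing information nor demanding more --- will require careful tracking of how the reduction number $r$ interacts with the Koszul syzygies of $(a_1^{*},\ldots,a_d^{*})$, and this is where the filtration version genuinely departs from the adic case $r=1$ treated by Nakamura.
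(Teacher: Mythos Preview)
Your plan contains a genuine gap in the step $(2)\Rightarrow(1)$. The ``St\"uckrad--Vogel-type characterization'' you invoke --- that a quasi-Buchsbaum module $M$ is Buchsbaum iff $\mathds{I}(M)=\ell(M/qM)-e(q,M)$ for \emph{some} parameter ideal $q$ --- is false. The equality $\mathds{I}(M)=\ell(M/qM)-e(q,M)$ says only that the particular system of parameters generating $q$ is standard; quasi-Buchsbaum modules always possess standard systems of parameters (any one contained in $\m^2$), yet there exist quasi-Buchsbaum modules that are not Buchsbaum. So showing that $a_1t,\ldots,a_dt$ is standard on $G(\I)$, which is indeed what your length computation would give, does not promote quasi-Buchsbaum to Buchsbaum.

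The paper's route for $(2)\Rightarrow(1)$ is substantially harder and uses the genuine St\"uckrad--Vogel criterion \cite[Theorem~2.15, Chap~I]{vogel}: $G(\I)$ is Buchsbaum iff the canonical maps $\phi^i_{G(\I)}:\h^i(\N;G(\I))\to\h^i_\N(G(\I))$ from Koszul cohomology on a full generating set of $\N$ to local cohomology are surjective for $i<d$. The difficulty, specific to the filtration setting, is that $\N$ is not generated in degree one, so one must first construct a suitable $G(\I)$-basis of $\N$ (Section~\ref{section-generating-set-of-N}) and then carry out a delicate induction on $d$, splitting into the cases $\depth A>0$ and $\depth A=0$; the latter requires proving injectivity of certain maps $\tau^i_A$ by an intricate manipulation of Koszul cochains (Lemma~\ref{Lemma-inside-main-proof} and Claims~2--3 in the proof of Theorem~\ref{main-theorem-last-section}). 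This is the real content of the theorem and occupies Section~\ref{section-main-proof}.

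Separately, your plan for $(1)\Leftrightarrow(3)$ via a Nakamura-style Koszul argument is unnecessarily indirect. The paper instead proves $(2)\Leftrightarrow(3)$ directly (Proposition~\ref{prop-I-equi-1} with $m=1$, available because $A$ Buchsbaum makes $a_1,\ldots,a_d$ a u.s.d-sequence), using the $\mathds{I}$-invariant comparison of Proposition~\ref{proposition-lemma-3.3}. The range $2<n\leq d+r$ falls out because for $n>d(2m-1)+r=d+r$ the inclusion $I_n\subseteq(a_1^2,\ldots,a_d^2)I_{n-2}$ is automatic from the reduction relation. The pointwise equalities $\lh^i(G(\I))=\lh^i(A)$ are then obtained from $(2)$ by a separate induction (Proposition~\ref{proposition-3.6-Y}), not as a by-product of Buchsbaumness.
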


This paper is organized as follows. Section \ref{section-pre} is devoted to recalling the basic definitions  
and  extend certain facts about the  $\mathds{I}$-invariant and d-sequences to the case of $I$-good filtration. In Section \ref{section-I-equi-case}, 
we consider the equi-$\mathds{I}$-invariant case, i.e., when the equality $\mathds{I}(G(\I))=\mathds{I}(A)$ holds. Theorem \ref{theorem-intro-1} and most of Theorem 
\ref{theorem-intro-2} are proved in this section. The methods of our proofs are inspired by the work of Yamagishi in \cite{Yam1} and \cite{Yam2}. However, 
obtaining the Buchsbaumness of $G(\I)$ from the equality $\mathds{I}(G(\I))=\mathds{I}(A)$ is difficult because the generators of unique homogeneous maximal ideals 
of $R(\I)$ or $G(\I)$ are not necessarily of degree one. Therefore, Yamagishi's methods, which are heavily dependent on the existence of a certain
generating set of $\N$ in degree one,  can not be applied in its present form for filtration. In Section \ref{section-generating-set-of-N}, we 
discuss the existence of a suitable generating set of $\N$ which allows to generalize the methods of \cite{Yam1} to the case of filtration. In Section 
\ref{section-main-proof}, we complete the proof of Theorem \ref{theorem-intro-2} by establishing the Buchsbaumness of $G(\I)$. 
As an application of our results, we indicate an alternative proof of a conjecture, stated by Corso in \cite{corso}, on Hilbert coefficients and Buchsbaumness of $G(I)$.

In this paper, let $M$ denote a finite $A$-module of dimension $s>0$. We write $\ell_A(M)$ for the length of $M$ and $\mu(M)$ for the  minimal number of generators
of $M$. We set $\nu:=\mu(\m)$. 
Let $U=\h^0_\m(A)$. For a quotient ring $A^\prime=A/J$, we write $\I A^\prime$ for the filtration $\{(I_n+J)/J\}_{n\geq 0}$ of the ring $A^\prime$.
It is easy to see that 
$\I A^\prime$ is an $IA^\prime$-good filtration. 
 The multiplicity of $A$ relative to the filtration $\I$ is written as 
 $e(\I)$ instead of $e(\I,A)$. If needed, we write an element $a\in R(\I)_n$ of degree $n$ as $at^n$ since $R(\I)\subseteq R[t]$. 
For a graded module $L$, the $n$-th graded component is denote by $[L]_n$. We assume that the residue field $A/\m$ is infinite.
\section{Preliminaries}\label{section-pre}
In this section, we discuss some results on generalized Cohen-Macaulay modules and their $\mathds{I}$-invariant.
The notion of generalized Cohen-Macaulay modules 
was first discussed in \cite{CST}. 
For a parameter ideal $Q$ of $M$, we define $\mathds{I}(Q;M):=\ell_A(M/QM)-e(Q,M)$. If $M$ is Cohen-Macaulay, $\mathds{I}(Q;M)=0$ for any parameter ideal $Q$.
The ideas discussed in \cite{CST} and \cite{Sc}, led to the study of modules for which  $\mathds{I}(Q,M))$, a measure of non Cohen-Macaulayness, is independent of 
the parameter ideals $Q$ of $M$. Such modules have many properties similar to those of Cohen-Macaulay modules. We refer to \cite{vogel} for
details. The $\mathds{I}$-invariant, also known as the Buchsbaum-invariant, of $M$ is defined as 
\[\mathds{I}(M):=\sup \{\mathds{I}(Q;M):Q \mbox{ is a parameter ideal of }M.\}\]  
In general, $0\leq \mathds{I}(M)\leq \infty$ and $\mathds{I}(M)$ is finite if and only if 
$M$ is generalized Cohen-Macaulay. Equivalently, $M$ is generalized Cohen-Macaulay if $\ell_A(\h_\m^i(M))<\infty$ for $0\leq i<s$.
In this case, 
 \[\mathds{I}(M)=\sum_{i=0}^{s-1}\binom{s-1}{i} \ell_A(\h^i_\m(M)).\]
 A system of parameters $a_1,\ldots,a_s\in\m$ of $M$ is called {\it standard} if $\mathds{I}(M)=\mathds{I}(Q;M)$ where $Q=(a_1,\ldots,a_s)$.
 An ideal $I$ with $\ell(M/IM)<\infty$ is called a {\it standard ideal for $M$} if every system of parameters of $M$ contained in $I$ is standard.
 Every system of parameters of $M$ is standard, i.e., $\m$ is a standard ideal for $M$, if and only if $M$ is  Buchsbaum. 
 We say that $M$ is {\it quasi-Buchsbaum} if $\m^2$ is a standard ideal for $M$. 
Clearly, Buchsbaum modules are quasi-Buchsbaum and $\mathds{I}(M)<\infty$ whenever
 $M$ is quasi-Buchsbaum. 
 We say that $G(\I)$ is quasi-Buchsbaum (respectively generalized Cohen-Macaulay) if $G(\I)_\N$ is
 quasi-Buchsbaum (respectively generalized Cohen-Macaulay) $R(\I)_\N$-module. 
 Note that $\h^i_{\N R(\I)_\N}(G(\I)_\N)\cong \h^i_\N(G(\I))$ for each $i$. 
 We have that
 \[ \mathds{I}(G(\I))=\mathds{I}(G(\I)_\N)=\sum_{i=0}^{s-1}\binom{s-1}{i} \ell_{R(\I)}(\h^i_\N(G(\I))).\] 
For $I$-adic filtration, i.e., $I_n=I^n$ for all $n$, it is known that $\mathds{I}(G(I))\geq \mathds{I}(A)$.
 The following result is a generalization of \cite[Lemma 5.1]{trung-gcm}.
\begin{proposition}\label{proposition-lemma-3.3}\label{lemma-5.1/trung}
Let $p\geq 1$ be an integer and let $a_1,\ldots,a_d\subseteq I_p$ a sequence of elements such that 
 $a_1t^p,\ldots,a_dt^p\in R(\I)_p$ is a system of parameters of $G(\I)$.  Then
 \[\mathds{I}((a_1t^p,\ldots,a_dt^p); G(\I))\geq \mathds{I}((a_1,\ldots,a_d); A)\] and
 equality holds if and only if $(a_1,\ldots,a_d)\cap I_k=(a_1,\ldots,a_d)I_{k-p}$ for all $k\geq 0$.
\end{proposition}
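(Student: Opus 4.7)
The plan is to compute $\ell_{R(\I)}(G(\I)/Q^{*}G(\I))$ and $e(Q^{*};G(\I))$ separately, where $Q^{*}:=(a_1t^p,\ldots,a_dt^p)$ and $Q:=(a_1,\ldots,a_d)$, and compare each with $\ell(A/Q)$ and $e(Q;A)$. Since $Q^{*}$ is generated in degree $p$, the graded pieces $[G(\I)/Q^{*}G(\I)]_n$ equal $I_n/I_{n+1}$ for $n<p$ and $I_n/(QI_{n-p}+I_{n+1})$ for $n\geq p$, which permits comparison with the filtration $\{Q+I_n\}_{n}$ of $A$. Note that $Q$ is automatically $\m$-primary: finiteness of $\ell(G(\I)/Q^{*}G(\I))$ forces $I_n\subseteq Q$ for $n\gg 0$, and $I_n$ itself is $\m$-primary.

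Summing the graded lengths yields
\[\ell(G(\I)/Q^{*}G(\I))=\ell(A/I_p)+\sum_{n\geq p}\ell\big(I_n/(QI_{n-p}+I_{n+1})\big).\]
For each $n\geq p$, a Noether-isomorphism computation (using $(Q\cap I_n)\cap(QI_{n-p}+I_{n+1})=QI_{n-p}+(Q\cap I_{n+1})$) produces the short exact sequence
\[0\to (Q\cap I_n)/((Q\cap I_{n+1})+QI_{n-p})\to I_n/(QI_{n-p}+I_{n+1})\to (Q+I_n)/(Q+I_{n+1})\to 0.\]
Combining this with the telescoping identity $\sum_{n\geq p}\ell((Q+I_n)/(Q+I_{n+1}))=\ell((Q+I_p)/Q)$ (finite since $I_n\subseteq Q$ for $n\gg 0$) and the identity $\ell(A/I_p)+\ell((Q+I_p)/Q)=\ell(A/Q)$ (valid since $Q\subseteq I_p$), one obtains
\[\ell(G(\I)/Q^{*}G(\I))=\ell(A/Q)+\sum_{n\geq p}\ell\big((Q\cap I_n)/((Q\cap I_{n+1})+QI_{n-p})\big).\]

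Applying the same graded decomposition to $\ell(G(\I)/(Q^{*})^{m}G(\I))$ produces $\ell(A/Q^{m})$ plus an analogous correction sum; I would bound this correction as $O(m^{d-1})$ using Artin--Rees-type estimates on $Q^{m}\cap I_{k}$, giving $e(Q^{*};G(\I))=e(Q;A)$. Subtracting,
\[\mathds{I}(Q^{*};G(\I))-\mathds{I}(Q;A)=\sum_{n\geq p}\ell\big((Q\cap I_n)/((Q\cap I_{n+1})+QI_{n-p})\big)\geq 0,\]
proving the inequality. For the equivalence, the condition $Q\cap I_n=QI_{n-p}$ (with convention $I_{n-p}=A$ for $n<p$) for all $n\geq 0$ visibly kills every summand. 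Conversely, vanishing of the sum forces $Q\cap I_n=(Q\cap I_{n+1})+QI_{n-p}$ for all $n\geq p$; iterating this recursion gives $Q\cap I_n\subseteq\bigcap_{j\geq 0}(QI_{n-p}+I_{n+j})$, and Krull's intersection theorem applied to the finitely generated $A$-module $A/QI_{n-p}$ (valid since $\bigcap_j I_{n+j}=0$, as $I_j\subseteq\m^{j-r}$ eventually from $I$-goodness) delivers $Q\cap I_n\subseteq QI_{n-p}$.

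The main obstacle will be the multiplicity equality $e(Q^{*};G(\I))=e(Q;A)$: because $Q^{*}$ sits in degree $p\geq 1$, its powers index a sum shifted by $mp$, and bounding $\sum_{k\geq mp}\ell((Q^{m}\cap I_{k})/((Q^{m}\cap I_{k+1})+Q^{m}I_{k-mp}))$ as $o(m^{d})$ requires a careful asymptotic estimate on $Q^{m}\cap I_{k}$ uniform in $k$, somewhat more delicate than the $p=1$ adic setting treated in \cite{trung-gcm}.
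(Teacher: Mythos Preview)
Your approach is different from the paper's and in one respect sharper: you obtain an \emph{exact} formula
\[
\mathds{I}(Q^{*};G(\I))-\mathds{I}(Q;A)=\sum_{n\geq p}\ell\bigl((Q\cap I_n)/((Q\cap I_{n+1})+QI_{n-p})\bigr),
\]
whereas the paper only derives the inequality through a chain of length comparisons. Your short exact sequence and telescoping are correct, and your treatment of the equality case (iterating the recursion $Q\cap I_n\subseteq QI_{n-p}+(Q\cap I_{n+1})$) is essentially what the paper does, except that the paper avoids Krull's intersection theorem by first noting that finiteness of $\ell(G(\I)/Q^{*}G(\I))$ forces $I_k=QI_{k-p}$ for all $k>k_0$, so the iteration terminates after finitely many steps.

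The one genuine gap is the multiplicity equality $e(Q^{*};G(\I))=e(Q;A)$, which you flag as the main obstacle but do not establish. The paper handles this by a sandwich argument rather than by bounding your correction sum directly: from $Q\subseteq I_p$ one has $\ell(A/Q^n)\geq\ell(A/I_{np})$, hence $e(Q;A)\geq e(\I)p^d$; on the other hand the chain
\[
\ell(A/I_{k(n)})\geq \ell\bigl(G(\I)/(Q^{*})^{n}G(\I)\bigr)\geq \ell(A/Q^n),\qquad k(n)=k_0+np+1,
\]
yields $e(\I)p^d\geq e(Q^{*};G(\I))\geq e(Q;A)$, and the three inequalities collapse. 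Your framework can absorb this observation too: once $I_k=QI_{k-p}$ for $k>k_0$, iteration gives $I_n=Q^mI_{n-mp}$ for $n>k_0+(m-1)p$, so the nonzero terms in your correction sum for $(Q^{*})^m$ are confined to the window $mp\leq n\leq k_0+(m-1)p$, whose length is independent of $m$; each term is a subquotient of $I_n/I_{n+1}$ and hence $O(m^{d-1})$, giving the $o(m^d)$ bound you need. But the paper's sandwich is shorter and sidesteps any uniform Artin--Rees estimate on $Q^m\cap I_k$.
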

\begin{proof}
For $n\geq 0$, we have $\ell(A/(a_1,\ldots,a_d)^n)\geq \ell(A/ I_{np})$. Therefore 
\begin{equation}\label{eqn-for-e0} e((a_1,\ldots,a_d),A)\geq e(\I)p^d.\end{equation}
Let $k_0\geq 0$ be an integer such that $I_{k+1}=(a_1,\ldots,a_d)I_{k+1-p}$ for all $k\geq k_0$. Then for $n\geq 0$ and $k\geq k_0+np$, 
$I_{k+1}=(a_1,\ldots,a_d)^n I_{k+1-np}.$ Put $k(n)=k_0+np+1$, Then, for all $n\geq 0$, 
\begin{align}
\ell(A/I_{k(n)})=\sum_{k=0}^{k(n)-1}\ell(I_k/I_{k+1})&\geq\ell(G(\I)/(a_1t^p,\ldots,a_dt^p)^n G(\I))\nonumber\\
&=\sum_{k=0}^\infty \ell(I_k/((a_1,\ldots,a_d)^n I_{k-np}+I_{k+1}))\nonumber\\ 
&=\sum_{k=0}^{k(n)-1} \ell(I_k/((a_1,\ldots,a_d)^n I_{k-np}+ I_{k+1}))\nonumber\\
 &\geq\sum_{k=0}^{k(n)-1} \ell\big(I_k/((a_1,\ldots,a_d)^n\cap I_k)+I_{k+1})\big)\label{eqn-inside-proposition-d-seq-0}\\
 & \geq\sum_{k=0}^{k(n)-1} \ell((I_k+(a_1,\ldots,a_d)^n)/(I_{k+1}+(a_1,\ldots,a_d)^n))\nonumber\\
 &=\ell(A/((a_1,\ldots,a_d)^n+I_{k(n)}))=\ell(A/(a_1,\ldots,a_d)^n).\label{eqn-inside-proposition-d-seq}
\end{align}
Thus $e(\I)p^d\geq e((a_1t^p,\ldots,a_dt^p),G(\I))\geq e((a_1,\ldots,a_d),A)$. Now using \eqref{eqn-for-e0}, we get that 
\begin{equation}\label{eqn-for-e0-2}
 e((a_1t^p,\ldots,a_dt^p),G(\I))=e((a_1,\ldots,a_d),A).
\end{equation}
Further, putting $n=1$ in \eqref{eqn-inside-proposition-d-seq}, we get 
\begin{equation*}
 \ell(G(\I)/(a_1t^p,\ldots,a_dt^p)G(\I))- e((a_1t^p,\ldots,a_dt^p),G(\I))\geq \ell(A/(a_1,\ldots,a_d))-e((a_1,\ldots,a_d),A).
\end{equation*}
Thus \[\mathds{I}((a_1t^p,\ldots,a_dt^p); G(\I))\geq \mathds{I}((a_1,\ldots,a_d);A).\]

Suppose equality holds. Then $\ell(G(\I)/(a_1t^p,\ldots,a_dt^p)G(\I))=\ell(A/(a_1,\ldots,a_d))$. 
  For $k>k_0$, $(a_1,\ldots,a_d)\cap I_k=(a_1,\ldots,a_d)\cap (a_1,\ldots,a_d)I_{k-p}= (a_1,\ldots,a_d)I_{k-p}.$
Suppose $k\leq k_0$. Then we have 
$(a_1,\ldots,a_d)\cap I_k \subseteq (a_1,\ldots,a_d)I_{k-p}+I_{k+1}$ from the equality in \eqref{eqn-inside-proposition-d-seq-0}. Therefore, 
\begin{align*}
 (a_1,\ldots,a_d)\cap I_k &\subseteq (a_1,\ldots,a_d) I_{k-p}+((a_1,\ldots,a_d)\cap I_{k+1})\\
 &\subseteq  (a_1,\ldots,a_d)I_{k-p}+((a_1,\ldots,a_d)I_{k+1-p}+((a_1,\ldots,a_d)\cap I_{k+2})\\
 &\vdots\\
 &\subseteq (a_1,\ldots,a_d)I_{k-p}+(a_1,\ldots,a_d)I_{k_0-p}\\
&=(a_1,\ldots,a_d)I_{k-p}.
\end{align*}
Clearly, if the above equality holds for all $k>0$, then $\ell(G(\I)/(a_1t^p,\ldots,a_dt^p)G(\I))=\ell(A/(a_1,\ldots,a_d)).$
\end{proof}
 The role played by regular sequences in the study of Cohen-Macaulay modules is broadly replaced
 by d-sequences in case of generalized Cohen-Macaulay modules. 
A sequence $a_1,\ldots,a_s$ of elements of $A$ is said to be a {\textit{d-sequence}} on $M$ if the equality 
 \[q_{i-1}M:a_ia_j=q_{i-1}:a_j\] holds for $1\leq i\leq j\leq s$ where 
 $q_{i-1}=(a_1,\ldots,a_{i-1})$ and $q_0=(0)$. It is said to be an {\textit{unconditioned d-sequence}} if it is a d-sequence in any order. Moreover, we 
 say that $a_1,\ldots,a_s$ is an {\textit{unconditioned strong d-sequence}} (u.s.d-sequence) on $M$ if $a_1^{n_1},\ldots,a_s^{n_s}$ is an 
 unconditioned d-sequence on $M$ for  all integers $n_1,\ldots,n_s>0$. 
 Recall that a standard system of parameters is a d-sequence and a module is Buchsbaum if and only if every system of parameters 
 is u-s-d-sequence.  Indeed, we have the following characterization for standard system of parameters. 
 \begin{remark}\label{remark-from-trung-huneke}\cite[Theorem A]{Sc}\cite[Theorem 2.1]{trung-gcm} 
 For any system of parameters $a_1,\ldots,a_s$ of $M$, the following conditions are equivalent:
 \begin{enumerate}
  \item $a_1,\ldots,a_s$ is an u.s.d-sequence on $M$;
  \item $a_1,\ldots,a_s$ is a standard system of parameters of $M$, i.e., $\mathds{I}((a_1,\ldots,a_s);M)=\mathds{I}(M)$;
  \item the equality $\mathds{I}((a_1,\ldots,a_s);M)=\mathds{I}((a_1^2,\ldots,a_s^2);M)$ holds.
 \end{enumerate}
\end{remark}
The following proposition is discussed in \cite[Proposition 2.2]{Yam2} except the last part. 
\begin{proposition}\label{proposition-2.2-Y}
 Let $a_1,\ldots,a_d$ be a sequence of elements in $I_1$.
 Let $m>0$ be an integer such that $a_1^{n_1},\ldots,a_d^{n_d}$ is an unconditioned d-sequence for all $n_i\geq m$ 
 and $1\leq i\leq d$. Then the following conditions are equivalent:
 \begin{enumerate}
  \item\label{key1} The equality 
  \[ (a_i^{n_i}~|~i\in \Lambda)\cap I_n=\sum_{i\in \Lambda} a_i^{n_i}I_{n-n_i} \]
  holds for all $\Lambda\subseteq [1,d]$, $n\in\mathbb{Z}$ and $n_i\geq m$;
  \item\label{key2} The equality 
 \[ (a_1^{2m},\ldots,a_d^{2m})\cap I_n= (a_1^{2m},\ldots,a_d^{2m})I_{n-2m}  \]
 holds for all $n\in\mathbb{Z}$.
 \end{enumerate}
When this is the case,  \[G(\I)/(a_it)^{n_i}G(\I)\cong G(\I A/(a_i^{n_i}))\]
as graded $R(\I)$-modules for all $n_i\geq m$ and $1\leq i\leq d$.
\end{proposition}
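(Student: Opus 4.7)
The plan is to dispense with the equivalence $(1) \Leftrightarrow (2)$ quickly, since it is exactly [Yam2, Proposition~2.2] restated for an $I$-good filtration, and to focus attention on the last part, which is the new content. The easy direction $(1) \Rightarrow (2)$ comes from specialising $\Lambda = [1,d]$ and $n_i = 2m$. For the converse $(2) \Rightarrow (1)$, one argues by induction on $|\Lambda|$ and on $\sum_{i \in \Lambda} n_i$, exploiting the colon identities provided by the unconditioned strong d-sequence hypothesis to bootstrap the single intersection equality at exponents $2m$ into the full family of intersection equalities at arbitrary $n_i \geq m$. Yamagishi's argument uses only the filtration axioms $I_{n+1} \subseteq I_n$ and $I I_n \subseteq I_{n+1}$, so it transfers verbatim to the present setting.

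For the last part I would proceed by a degreewise comparison. Assume (1), fix a subset $\Lambda \subseteq [1,d]$ (the statement as written reads most naturally as the singleton case $\Lambda = \{i\}$, but the argument is the same for any $\Lambda$) and exponents $n_i \geq m$, and set
\[
K = (a_i^{n_i} : i \in \Lambda) \subseteq A, \qquad \widetilde{K} = ((a_i t)^{n_i} : i \in \Lambda) \subseteq R(\I).
\]
Since $a_i \in I_1$, we have $(a_i t)^{n_i} = a_i^{n_i} t^{n_i} \in R(\I)_{n_i}$, so
\[
\bigl[G(\I)/\widetilde{K}\,G(\I)\bigr]_k \;=\; I_k \Big/ \Bigl(\sum_{i \in \Lambda} a_i^{n_i} I_{k - n_i} + I_{k+1}\Bigr).
\]
On the other hand, from the definition of $\I\,A/K$,
\[
[G(\I\,A/K)]_k \;=\; (I_k + K)/(I_{k+1} + K) \;\cong\; I_k / \bigl(I_{k+1} + (I_k \cap K)\bigr).
\]
Invoking condition (1) with $n = k$ yields $I_k \cap K = \sum_{i \in \Lambda} a_i^{n_i} I_{k - n_i}$, so the two graded components coincide in every degree. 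The surjection $G(\I) \twoheadrightarrow G(\I\,A/K)$ induced by $A \twoheadrightarrow A/K$ is graded and $R(\I)$-linear and visibly kills $\widetilde{K}\,G(\I)$, so it factors through a graded $R(\I)$-linear surjection that, by the degreewise identification above, must be the desired isomorphism.

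The \emph{main obstacle} in the whole proposition is the direction $(2) \Rightarrow (1)$: from a single intersection formula at exponents $2m$ one must extract a uniform family for all $\Lambda$ and all $n_i \geq m$, and this is where the unconditioned strong d-sequence hypothesis is indispensable. Once (1) is in hand, the last part is a routine degreewise bookkeeping argument driven entirely by the identity $I_k \cap K = \sum_{i \in \Lambda} a_i^{n_i} I_{k - n_i}$.
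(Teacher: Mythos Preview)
Your proposal is correct and matches the paper's proof essentially line for line: the paper also defers the equivalence $(1)\Leftrightarrow(2)$ to \cite[Proposition~2.2]{Yam2} and proves the last part by the same degreewise identification $I_k/(a_i^{n_i}I_{k-n_i}+I_{k+1})\cong I_k/((a_i^{n_i})\cap I_k+I_{k+1})\cong (I_k+(a_i^{n_i}))/(I_{k+1}+(a_i^{n_i}))$. Your version is slightly more general in treating arbitrary $\Lambda$ rather than singletons, but the argument is identical.
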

\begin{proof}
Equivalence of \eqref{key1} and \eqref{key2} follows from \cite[Proposition 2.2]{Yam2}.
We only prove the last part. We have that, for all $k\geq 0,$
\[ [G(\I)/(a_it)^{n_i}G(\I)]_k \cong \frac{I_k}{a_i^{n_i}I_{k-n_i}+I_{k+1}}
\cong\frac{I_k}{(a_i^{n_i}\cap I_{k})+I_{k+1}}
\cong \frac{I_k+(a_i^{n_i})}{I_{k+1}+(a_i^{n_i})}
\cong [G(\I A/(a_i^{n_i}))]_k.\]
\end{proof}
It is often useful to consider rings of positive depth. Particularly, it provides a way to reduce a problem to lower dimensional cases and apply 
method of induction. We now briefly relate the properties of $\mathds{I}$-invariant in the ring $A$ and $A/U$ where $U=\h_\m^0(A)$.
Consider the exact sequence 
\[0\to U^{*}\to G(\I)\xrightarrow{\phi}G(\I A/U)\to 0\]
of graded $R(\I)$-modules where $\phi:G(\I)\to G(\I A/U)$ is the canonical epimorphism of graded modules induced from the projection map $A\to A/U$ and  $U^*=\ker\phi.$ Since $[G(\I A/U)]_n=(I_n+U)/(I_{n+1}+U)\cong I_n/((U\cap I_n)+I_{n+1})$, we have 
$$[U^{*}]_n=((U\cap I_n)+I_{n+1})/I_{n+1}\cong (U\cap I_n)/(U\cap I_{n+1})$$
for $n\in\mathbb{Z}$. Thus $\ell_A(U^*)=\ell_A(U)=\ell_A(\h^0_\m(A))$. 
Then, applying the local
cohomology functor to the above sequence, we get the short exact sequence 
\[ 0\to U^*\to \h^0_{\mathcal{N}}(G(\I))\to \h^0_{\mathcal{N}}(G(\I A/U))\to 0\]  and the isomorphism
\[ \h^i_{\mathcal{N}}(G(\I))\cong \h^i_{\mathcal{N}}(G(\I A/U)) \text{ for all } i\geq 1.\]
\begin{lemma}\label{Lemma-3.5-Y} With the notations as above 
\begin{enumerate}
 \item \label{key3} $\lh^0(G(\I A/U))=\lh^0(G(\I))-\ell(U^*)=\lh^0(G(\I))-\lh^0(A)$  and $\lh^i(G(\I /U))=\lh^i(G(\I))$ for $i\geq 1$.
\item \label{key4} The following conditions are equivalent:
\begin{enumerate}
 \item $\lh^0(G(\I A/U))=0$;
 \item $\lh^0(G(\I))=\lh^0(A)$;
 \item $\h^0_\N(G(\I))=U^*$. 
\end{enumerate}
\item\label{key5} Suppose $A$ is generalized Cohen-Macaulay and $\mathds{I}(G(\I))=\mathds{I}(A)$. Then $\mathds{I}(G(\I A/U))=\mathds{I}(A/U)$.
\end{enumerate}
\end{lemma}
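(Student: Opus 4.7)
The plan is to handle parts (1) and (2) as direct bookkeeping consequences of the two exact sequences displayed in the excerpt immediately before the lemma statement, and then to deduce part (3) by a parallel computation after passing to $A/U$.

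For part (1), I would take lengths in the short exact sequence
\[0 \to U^* \to \h^0_\N(G(\I)) \to \h^0_\N(G(\I A/U)) \to 0.\]
The identity $\ell(U^*) = \ell(U) = \lh^0(A)$, already recorded in the preceding paragraph, yields $\lh^0(G(\I A/U)) = \lh^0(G(\I)) - \lh^0(A)$. The isomorphisms $\h^i_\N(G(\I)) \cong \h^i_\N(G(\I A/U))$ for $i \geq 1$, also noted there, give the remaining length equalities.

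For part (2), the equivalence of (a) and (b) is immediate from (1). For (a) $\Leftrightarrow$ (c), since $U^* \hookrightarrow \h^0_\N(G(\I))$ is injective with cokernel $\h^0_\N(G(\I A/U))$, the equality $U^* = \h^0_\N(G(\I))$ is equivalent to $\h^0_\N(G(\I A/U)) = 0$; and since this module has finite length, its vanishing is equivalent to $\lh^0(G(\I A/U)) = 0$.

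For part (3), I would first note that $\dim A/U = d$ because $U$ has finite length. Applying $\h^i_\m(-)$ to $0 \to U \to A \to A/U \to 0$ and using $\h^i_\m(U) = 0$ for $i \geq 1$ gives $\h^0_\m(A/U) = 0$ together with $\h^i_\m(A/U) \cong \h^i_\m(A)$ for $i \geq 1$; hence $\mathds{I}(A/U) = \mathds{I}(A) - \lh^0(A)$. Part (1) produces the analogous identity $\mathds{I}(G(\I A/U)) = \mathds{I}(G(\I)) - \lh^0(A)$ on the graded side. Substituting the hypothesis $\mathds{I}(G(\I)) = \mathds{I}(A)$ then yields $\mathds{I}(G(\I A/U)) = \mathds{I}(A/U)$. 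The only (mild) conceptual point, which does the real work in (3), is recognizing that passing from $A$ to $A/U$ strips out precisely the $\h^0_\m$-contribution $\lh^0(A)$ from the $\mathds{I}$-invariant on both the ring side and the graded side in exactly the same amount, which is what forces the equi-$\mathds{I}$-invariant hypothesis to descend cleanly to the quotient.
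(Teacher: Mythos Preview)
Your proposal is correct and follows essentially the same approach as the paper: parts (1) and (2) are read off directly from the exact sequence and isomorphisms recorded just before the lemma, and part (3) is the one-line computation $\mathds{I}(G(\I A/U))=\mathds{I}(G(\I))-\lh^0(A)=\mathds{I}(A)-\lh^0(A)=\mathds{I}(A/U)$. Your write-up simply supplies the justification for $\mathds{I}(A/U)=\mathds{I}(A)-\lh^0(A)$ (via the long exact sequence of $0\to U\to A\to A/U\to 0$), which the paper takes for granted.
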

\begin{proof}
The assertions \eqref{key3} and \eqref{key4} are  discussed above.
 For \eqref{key5}, suppose $\mathds{I}(G(\I))=\mathds{I}(A)$. Then, by assertion \eqref{key1}, \[\mathds{I}(G(\I A/U))=\mathds{I}(G(\I))-\lh^0(A)=\mathds{I}(A)-\lh^0(A)=\mathds{I}(A/U)\]
\end{proof}
\section{Equi-$\mathds{I}$-invariant case and quasi-Buchsbaumness of $G(\I)$}\label{section-I-equi-case}
Let $(a_1,\ldots,a_d)\subseteq I_1$ be a reduction of $\I$ and $r\geq 0$ an integer such that
$I_{n+1}=(a_1,\ldots,a_d)I_n$ for all $n\geq r$. We write $Q$ for the ideal $(a_1,\ldots,a_d)$. For this section, we assume that 
$A$ is generalized Cohen-Macaulay. We prove several equivalent conditions for the equi-$\mathds{I}$-invariant case, i.e., when the equality 
$\mathds{I}(G(\I))=\mathds{I}(A)$ holds. The results of this 
section are discussed in \cite{Yam2} for $I$-adic filtration. Similar methods extend the results to filtration.  

\begin{proposition}\label{prop-I-equi-1}
Let  $m>0$ be an integer such that $a_1^{n_1},\ldots,a_d^{n_d}$ is an unconditioned d-sequence for
all $n_i\geq m$, $1\leq i\leq d$. Then the following conditions are equivalent:
\begin{enumerate}
 \item \label{prop-I-equi-1-item-1} $\mathds{I}(G(\I))=\mathds{I}(A)$;
 \item \label{prop-I-equi-1-item-2} The equality $(a_1^{2m},\ldots,a_d^{2m})\cap I_n=(a_1^{2m},\ldots,a_d^{2m})I_{n-2m}$ holds for $2m<n\leq d(2m-1)+r$;
 \item \label{prop-I-equi-1-item-3} The equality $(a_1^{2m},\ldots,a_d^{2m})\cap I_n=(a_1^{2m},\ldots,a_d^{2m})I_{n-2m}$  holds for all $n\in \mathbb{Z}$.
\end{enumerate}
When this is the case, $(a_1t)^{n_1},(a_2t)^{n_2},\ldots,(a_dt)^{n_d}$ is an unconditioned d-sequence for $G(\I)$ for all 
$n_i\geq m$, $1\leq i\leq d$. 
\end{proposition}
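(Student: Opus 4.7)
The strategy is to derive (1) $\Leftrightarrow$ (3) from Proposition \ref{lemma-5.1/trung} applied to the power sequence $a_1^{2m}, \ldots, a_d^{2m}$ with $p = 2m$, using Remark \ref{remark-from-trung-huneke} to compute $\mathds{I}((a_1^{2m}, \ldots, a_d^{2m}); A)$, and to reduce (3) to the finite range (2) by a direct combinatorial argument. The implication (3) $\Rightarrow$ (2) is immediate. For the converse, the equality in (3) is trivial for $n \leq 2m$, since both sides collapse to $(a_1^{2m}, \ldots, a_d^{2m})$ (because $a_i^{2m} \in I_{2m} \subseteq I_n$ while $I_{n-2m} = A$). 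For $n > d(2m-1) + r$, I would use $I_n = Q^{n-r} I_r$ together with a pigeonhole argument: any monomial of degree $n - r > d(2m-1)$ in $a_1, \ldots, a_d$ has some exponent at least $2m$, so $I_n \subseteq (a_1^{2m}, \ldots, a_d^{2m}) I_{n-2m}$ and in fact $(a_1^{2m}, \ldots, a_d^{2m}) \cap I_n = I_n = (a_1^{2m}, \ldots, a_d^{2m}) I_{n-2m}$. The remaining middle range $2m < n \leq d(2m-1) + r$ is exactly (2).

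For (1) $\Leftrightarrow$ (3), apply Proposition \ref{lemma-5.1/trung} to the system of parameters $a_1^{2m}, \ldots, a_d^{2m} \in I_{2m}$ with $p = 2m$. The elements $a_i^{2m} t^{2m} \in G(\I)_{2m}$ form a system of parameters of $G(\I)$, and
\[
\mathds{I}((a_1^{2m} t^{2m}, \ldots, a_d^{2m} t^{2m}); G(\I)) \geq \mathds{I}((a_1^{2m}, \ldots, a_d^{2m}); A),
\]
with equality iff condition (3) holds. By hypothesis $a_1^{2m}, \ldots, a_d^{2m}$ is an u.s.d-sequence on $A$ and hence, by Remark \ref{remark-from-trung-huneke}, a standard system of parameters, so the right-hand side equals $\mathds{I}(A)$. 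Writing $Q^{*} = (a_1^{2m} t^{2m}, \ldots, a_d^{2m} t^{2m})$, the chain $\mathds{I}(A) = \mathds{I}(G(\I)) \geq \mathds{I}(Q^{*}; G(\I)) \geq \mathds{I}(A)$ forces equality throughout, giving (1) $\Rightarrow$ (3). Conversely, (3) yields $\mathds{I}(Q^{*}; G(\I)) = \mathds{I}(A)$, which must be promoted to $\mathds{I}(G(\I)) = \mathds{I}(A)$ by induction on $d$ via the isomorphism $G(\I)/(a_d t)^{2m} G(\I) \cong G(\I A/(a_d^{2m}))$ from Proposition \ref{proposition-2.2-Y}. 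Since $a_d^{2m}$ is part of a standard system of parameters, the quotient $A/(a_d^{2m})$ is generalized Cohen-Macaulay of dimension $d-1$ and the induced filtration $\I A/(a_d^{2m})$ inherits all hypotheses on $m$; the local cohomology sequences produced by multiplication by $(a_d t)^{2m}$ on $G(\I)$ then transfer the inductive equality back to $G(\I)$. The closing assertion that $(a_1 t)^{n_1}, \ldots, (a_d t)^{n_d}$ is an unconditioned d-sequence on $G(\I)$ for all $n_i \geq m$ follows from Remark \ref{remark-from-trung-huneke} applied to $G(\I)$, together with the isomorphism in Proposition \ref{proposition-2.2-Y} for varying exponents.

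The main obstacle is the direction (3) $\Rightarrow$ (1), namely promoting the equality of $\mathds{I}$-invariants for the distinguished parameter ideal $Q^{*}$ to the full equality $\mathds{I}(G(\I)) = \mathds{I}(A)$. This step entangles two assertions --- that $G(\I)$ is generalized Cohen-Macaulay and that $Q^{*}$ is standard --- and the induction on $d$ requires careful verification that the threshold $m$, the reduction-number shift, and the u.s.d-sequence property all descend faithfully to $\I A/(a_d^{2m})$, along with control of the connecting maps in the local cohomology long exact sequence.
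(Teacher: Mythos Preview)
Your arguments for $(1)\Rightarrow(3)$ and $(2)\Leftrightarrow(3)$ are correct and match the paper's proof essentially verbatim.

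The discrepancy is in $(3)\Rightarrow(1)$. You propose an induction on $d$ via the quotient $G(\I)/(a_dt)^{2m}G(\I)\cong G(\I A/(a_d^{2m}))$ and a transfer through local cohomology long exact sequences, and you correctly flag this as the ``main obstacle'': descending the hypotheses to the quotient and controlling the connecting maps requires knowing in advance that $G(\I)$ is generalized Cohen--Macaulay and that $(a_dt)^{2m}$ behaves well on it, which is circular. This route can probably be pushed through with extra work, but the paper avoids it entirely by a much shorter observation that you overlook.

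The point is that Proposition~\ref{proposition-2.2-Y} does more than supply the isomorphism: its condition~(1) upgrades the single intersection equality~(3) (at exponent $2m$) to the equality $(a_1^{p},\ldots,a_d^{p})\cap I_n=(a_1^{p},\ldots,a_d^{p})I_{n-p}$ for \emph{every} $p\geq m$, in particular for $p=m$ and $p=2m$ simultaneously. Applying Proposition~\ref{lemma-5.1/trung} at both levels gives
\[
\mathds{I}\big(((a_1t)^{m},\ldots,(a_dt)^{m});G(\I)\big)=\mathds{I}(A)=\mathds{I}\big(((a_1t)^{2m},\ldots,(a_dt)^{2m});G(\I)\big),
\]
and now Remark~\ref{remark-from-trung-huneke}\,(3)$\Rightarrow$(2) applied to $G(\I)_\N$ says that $(a_1t)^{m},\ldots,(a_dt)^{m}$ is a \emph{standard} system of parameters of $G(\I)$, hence $\mathds{I}(G(\I))=\mathds{I}(A)$. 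No induction on $d$, no local cohomology, and no prior knowledge that $G(\I)$ is generalized Cohen--Macaulay is needed. The same device (Proposition~\ref{proposition-2.2-Y} at arbitrary exponents $n_i\geq m$, then Remark~\ref{remark-from-trung-huneke}) also yields the closing u.s.d-sequence statement in one line, which you essentially noted; what you missed is that this very argument already closes $(3)\Rightarrow(1)$.
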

\begin{proof} 
 Note that $a_1^m,\ldots,a_d^m$ forms an u-s-d-sequence. By Proposition \ref{proposition-lemma-3.3}, we have that,
 for all $l\geq 1$	
 \[\mathds{I}(G(\I))\geq  \mathds{I}(((a_1t)^{lm},\ldots,(a_dt)^{lm});G(\I))\geq \mathds{I}((a_1^{lm},\ldots,a_d^{lm});A)=\mathds{I}(A).\]
If $\mathds{I}(G(\I))=\mathds{I}(A)$, then Proposition \ref{proposition-lemma-3.3} gives that
$(a_1^{lm},\ldots,a_d^{lm})\cap I_n=(a_1^{lm},\ldots,a_d^{lm})I_{n-lm}$ for $n\in \mathbb{Z}$ and for any $l\geq 1$  
which gives $\eqref{prop-I-equi-1-item-3}$. 

Now suppose $\eqref{prop-I-equi-1-item-3}$ holds. By Proposition \ref{proposition-2.2-Y}, 
$(a_1^{lm},\ldots,a_d^{lm})\cap I_n=(a_1^{lm},\ldots,a_d^{lm})I_{n-lm}$
holds for $n\in\mathbb{Z}$ and for all $l\geq 1$. Consequently, Proposition \ref{proposition-lemma-3.3} gives
the equality $\mathds{I}(((a_1t)^{lm},\ldots,(a_dt)^{lm});G(\I))= \mathds{I}((a_1^{lm},\ldots,a_d^{lm});A)=\mathds{I}(A)$
for all $l\geq 1$. Thus $\mathds{I}(G(\I))= \mathds{I}(((a_1t)^{lm},\ldots,(a_dt)^{lm});G(\I))=\mathds{I}(A)$ for all $l\geq 1$ because $(a_1t)^{m},\ldots,(a_dt)^{m})$ forms a u.s.d-sequence and 
by Remark \ref{remark-from-trung-huneke}. This establishes $\eqref{prop-I-equi-1-item-1}\Leftrightarrow \eqref{prop-I-equi-1-item-3}$.

The statements  $\eqref{prop-I-equi-1-item-2}$ and $\eqref{prop-I-equi-1-item-3}$ are equivalent since
 for all $n>d(2m-1)+r$, we have that
 \[ I_n=Q^{n-r}I_r\subseteq (a_1^{2m},\ldots,a_d^{2m})Q^{n-r-2m}I_r \subseteq (a_1^{2m},\ldots,a_d^{2m})I_{n-2m}.\]
The last statement follows from Remark \ref{remark-from-trung-huneke} and the fact that for $n_i\geq m$,
\[\mathds{I}(G(\I))\geq \mathds{I}(((a_1t)^{n_1},\ldots,(a_dt)^{n_d});G(\I))\geq \mathds{I}(((a_1t)^{m},\ldots,(a_dt)^{m});G(\I))=\mathds{I}(G(\I)).\]
 \end{proof}
 We now prove the quasi-Buchsbaumness of the associated graded ring.
 Recall that an $A$-module $M$ is quasi-Buchsbaum if and only at least one (equivalently every) system of parameters contained in $\m^2$ is a weak $M$-sequence. Recall, from \cite{trung-abs}, that a sequence $a_1,\ldots,a_s$ of elements of $A$ is said to be a {\textit{weak $M$-sequence}} if the equality 
 $q_{i-1}M:a_i=q_{i-1}M:\m$ holds for $1\leq i\leq s$ where 
 $q_{i-1}=(a_1,\ldots,a_{i-1})$ and $q_0=(0)$.
 The following result is a generalization of \cite[Theorem 1.2]{Yam2}. See \cite{Yam2} for the converse statement in $I$-adic case.
 \begin{theorem}\label{theorem-quasi-buchs}
Suppose that $A$ is a quasi-Buchsbaum local ring and $\mathds{I}(G(\I))=\mathds{I}(A)$. Then $G(\I)$ is quasi-Buchsbaum over $R(\I)$. 
\end{theorem}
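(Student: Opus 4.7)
\bigskip
\noindent\emph{Proof plan.}

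\smallskip
\noindent The goal is to exhibit a system of parameters of $G(\I)$ lying in $\N^2$ that forms a weak $G(\I)$-sequence, or equivalently to show $\N\cdot\h^i_\N(G(\I))=0$ for $0\leq i<d$. The first move is to reduce to the case $\depth A>0$. Setting $U=\h^0_\m(A)$, Lemma \ref{Lemma-3.5-Y}\,\eqref{key5} gives $\mathds{I}(G(\I A/U))=\mathds{I}(A/U)$, and $A/U$ inherits the quasi-Buchsbaum property from $A$ (standard for quasi-Buchsbaum rings). The canonical exact sequence $0\to U^{*}\to G(\I)\to G(\I A/U)\to 0$ already identifies $\h^i_\N(G(\I))$ with $\h^i_\N(G(\I A/U))$ for $i\geq 1$, and Lemma \ref{Lemma-3.5-Y}\,\eqref{key4} takes care of the $i=0$ part, so one may assume $\depth A>0$.

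\smallskip
\noindent The next step is to construct a suitable d-sequence inside $G(\I)$. Since $A/\m$ is infinite, pick a minimal reduction $Q=(a_1,\dots,a_d)\subseteq I_1$ of $\I$ with $a_1,\dots,a_d$ a sufficiently general system of parameters of $A$. Because $A$ is quasi-Buchsbaum, $\m^2$ is a standard ideal, so $a_1^{2},\dots,a_d^{2}\in\m^2$ is a standard system of parameters; by Remark \ref{remark-from-trung-huneke} it is an unconditioned strong d-sequence, so $a_1^{n_1},\dots,a_d^{n_d}$ is an unconditioned d-sequence for all $n_i\geq 2$. Now apply Proposition \ref{prop-I-equi-1} with $m=2$: the hypothesis $\mathds{I}(G(\I))=\mathds{I}(A)$ forces $(a_1t)^{n_1},\dots,(a_dt)^{n_d}$ to be an unconditioned d-sequence on $G(\I)$ for every $n_i\geq 2$. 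In particular, the elements $(a_it)^{2}\in\N^{2}$ form a system of parameters of $G(\I)$ with strong vanishing properties.

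\smallskip
\noindent Finally, induct on $d$. Proposition \ref{proposition-2.2-Y} provides the isomorphism $G(\I)/(a_it)^{n_i}G(\I)\cong G(\I A/(a_i^{n_i}))$ for $n_i\geq 2$, and the equi-$\mathds{I}$ equality together with Remark \ref{remark-from-trung-huneke} propagates to $A/(a_i^{n_i})$, which is again quasi-Buchsbaum of dimension $d-1$. Combining the d-sequence property (which controls kernels of multiplication by $(a_it)^{n_i}$ on $G(\I)$) with the long exact sequences in local cohomology coming from $0\to G(\I)(-n_i)\xrightarrow{(a_it)^{n_i}}G(\I)\to G(\I A/(a_i^{n_i}))\to 0$ (modified by the d-sequence correction term), the inductive hypothesis applied to $G(\I A/(a_i^{n_i}))$ yields $\N\cdot\h^i_\N(G(\I))=0$ for $0\leq i<d$. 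The base case $d=1$ is handled directly by the reduction $\depth A>0$ and the description of $\h^0_\N(G(\I))$ from Lemma \ref{Lemma-3.5-Y}.

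\smallskip
\noindent The main obstacle is that $\N=\m R(\I)+R(\I)_+$ has generators in two degrees, and the required annihilation $\N\,\h^i_\N(G(\I))=0$ must be checked against both. The positive-degree generators are tamed by the d-sequence $(a_it)^{n_i}$ produced in step two, but the degree-zero generators in $\m$ are not directly controlled by Proposition \ref{prop-I-equi-1}; their annihilation must be imported from the quasi-Buchsbaum hypothesis on $A$ via the bridge of Lemma \ref{Lemma-3.5-Y} and the identification of $\h^i_\N(G(\I))$ with the cohomology of $G(\I A/U)$. Coordinating these two sources of vanishing through the inductive step is the delicate part of the argument.
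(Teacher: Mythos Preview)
Your strategy diverges from the paper's and contains a genuine gap. The paper gives a direct element-chase with no induction and no reduction to positive depth: setting $f_i=a_it$, it shows that $f_1^2,\ldots,f_d^2$ is a weak $G(\I)_{\N}$-sequence by taking a homogeneous $g=x+I_{m+1}$ with $f_i^2g\in(f_1^2,\ldots,f_{i-1}^2)G(\I)$, lifting the relation to $a_i^2x=y+z$ in $A$, and then using the intersection identities $(a_1^2,\ldots,a_i^2)\cap I_n=(a_1^2,\ldots,a_i^2)I_{n-2}$ supplied by Propositions~\ref{prop-I-equi-1} and~\ref{proposition-2.2-Y} together with the weak-sequence property $(a_1^2,\ldots,a_{i-1}^2):a_i^2=(a_1^2,\ldots,a_{i-1}^2):\m$ in $A$ to conclude $\N g\subseteq(f_1^2,\ldots,f_{i-1}^2)G(\I)$. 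The quasi-Buchsbaum hypothesis on $A$ is invoked exactly once, at the level of colon ideals, and is transported to $G(\I)$ through those intersection identities.

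The gap in your plan is the claim that $A/(a_i^{n_i})$ is again quasi-Buchsbaum. This is \emph{not} true in general: quasi-Buchsbaumness, unlike Buchsbaumness, is not inherited by quotients modulo a parameter, even one lying in $\m^2$. From the exact sequence $0\to\h^j_\m(A)\to\h^j_\m(A/a_i^{2})\to\h^{j+1}_\m(A)\to 0$ one can only deduce $\m^2\h^j_\m(A/a_i^{2})=0$, not $\m\h^j_\m(A/a_i^{2})=0$, so your induction hypothesis need not apply to the quotient. The very same extension obstruction reappears in your final step: knowing that $\N$ annihilates both ends of $0\to\h^j_\N(G(\I))\to\h^j_\N(G(\I A/a_i^{n_i}))\to\h^{j+1}_\N(G(\I))\to 0$ does not force $\N$ to annihilate the middle, so even if the quotient were quasi-Buchsbaum you could not lift the conclusion back to $G(\I)$. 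You flag this as ``the delicate part'' but provide no mechanism to resolve it; the paper's direct argument sidesteps both obstacles entirely.
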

\begin{proof}
 Suppose $A$ is quasi-Buchsbaum and $\mathds{I}(G(\I))=\mathds{I}(A)$. Then $a_1^{n_1},\ldots,a_d^{n_d}$ is a weak sequence and an unconditioned d-sequence in $A$
 for all $n_i\geq 2$. 
 Let $f_i=a_it\in R(\I)_1 $. We show that $f_1^2,\ldots,f_d^2$ is a weak $G(I)_{\N}$-sequence, i.e., 
 \[(f_1^2,\ldots,f_{i-1}^2)G(\I):f_i^2\subseteq (f_1^2,\ldots,f_{i-1}^2)G(\I):\N\]
 for every $1\leq i\leq d.$ For this, we fix an $i$ and let $g$ be a homogeneous element of $G(\I)$ with $\deg g=m$ such that 
 $gf_i^2\subseteq  (f_1^2,\ldots,f_{i-1}^2)G(\I)$. Let $g=x+I_{m+1}$ for some $x\in I_m$. Then there exist an element 
 $y\in (a_1^2,\ldots,a_{i-1}^2)I_m$ and $z\in I_{m+3}$ such that $a_i^2x=y+z$. Since $a_1^{n_1},\ldots,a_d^{n_d}$ is an unconditioned d-sequence in $A$ for all $n_i\geq 2$ and $\mathds{I}(G(\I))=\mathds{I}(A)$, we have 
 $z\in (a_1^2,\ldots,a_{i}^2)\cap I_{m+3}=(a_1^2,\ldots,a_{i}^2)I_{m+1}$ by 
 Proposition \ref{prop-I-equi-1} and Proposition \ref{proposition-2.2-Y}. Let $v\in I_{m+1}$ such that $a_i^2(x-v)\in(a_1^2,\ldots,a_{i-1}^2).$
 
 Now $x-v\in (a_1^2,\ldots,a_{i-1}^2):a_i^2=(a_1^2,\ldots,a_{i-1}^2):\m$ as $a_1^2,\ldots,a_d^2$ is a weak $A$-sequence. Now
 we have that 
 $\m (x-v)\subseteq (a_1^2,\ldots,a_{i-1}^2)\cap I_m\subseteq (a_1^2,\ldots,a_{i-1}^2)I_{m-2}$ and 
 $I_n(x-v)\subseteq (a_1^2,\ldots,a_{i-1}^2)\cap I_{m+n}\subseteq 
 (a_1^2,\ldots,a_{i-1}^2)I_{m+n-2}$ for all $n\geq 1$ by   Proposition \ref{prop-I-equi-1} and Proposition \ref{proposition-2.2-Y} again. 
 This implies that $\N g\subseteq (f_1^2,\ldots,f_{i-1}^2)G(\I)$. Thus $G(\I)$ is a quasi-Buchsbaum module over $R(\I)$. 
\end{proof}
\begin{proposition}\label{proposition-3.6-Y}
 The following conditions are equivalent:
 \begin{enumerate}
  \item \label{item-1-prop-3.6-Y} $\mathds{I}(G(\I))=\mathds{I}(A)$;
  \item \label{item-2-prop-3.6-Y} $\lh^i(G(\I))=\lh^i(A)$ for $0\leq i<d.$
 \end{enumerate}
 When this is the case, $A/U$ satisfies the above equivalent conditions. Furthermore, we have that
 \begin{enumerate}[label=(\roman{*})]
 \item the sequence 
 $$0\to \h^0_\N(G(\I))\to G(\I)\to G(\I A/U)\to 0$$ 
of graded $R(\I)$-modules is exact, $[\h^0_\N(G(\I))]_n\cong (U\cap I_n)/(U\cap I_{n+1})$ for all $n\in\mathbb{Z}$ and
  \item if $\m \h^0_\m(A)=0,$ then $\N\h^0_\N(G(\I))=0.$
 \end{enumerate}
\end{proposition}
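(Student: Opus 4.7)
The implication $(2) \Rightarrow (1)$ is immediate from the defining formula $\mathds{I}(M) = \sum_{i=0}^{s-1}\binom{s-1}{i}\lh^i(M)$. For $(1) \Rightarrow (2)$, my strategy is to prove the termwise inequality $\lh^i(G(\I)) \geq \lh^i(A)$ for every $0 \leq i < d$; equality of two sums with the same positive binomial weights then forces termwise equality. The case $i=0$ follows from the inclusion $U^* \hookrightarrow \h^0_\N(G(\I))$ together with $\ell_A(U^*) = \ell_A(U) = \lh^0(A)$ recorded in Section \ref{section-pre}. For $1 \leq i < d$, part \eqref{key3} of Lemma \ref{Lemma-3.5-Y} gives $\lh^i(G(\I)) = \lh^i(G(\I A/U))$, while the cohomology long exact sequence of $0 \to U \to A \to A/U \to 0$ (together with the finite length of $U$) gives $\lh^i(A) = \lh^i(A/U)$. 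This reduces the problem to $A/U$, which has positive depth, where I would proceed by induction on $d$: pick a superficial element $a \in I_1$ for $\I A/U$ (possible since the residue field is infinite), use Proposition \ref{proposition-2.2-Y} to identify a quotient of $G(\I A/U)$ with $G((\I A/U)/(a))$, and chase through the resulting long exact sequences in local cohomology. The base case $d=1$ is trivial since then $\mathds{I} = \lh^0$.

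The statement that $A/U$ also satisfies the equivalent conditions is immediate from part \eqref{key5} of Lemma \ref{Lemma-3.5-Y}. For assertion (i), once (1) and (2) are in hand, part \eqref{key4} of Lemma \ref{Lemma-3.5-Y} upgrades the inclusion $U^* \subseteq \h^0_\N(G(\I))$ to an equality; the canonical short exact sequence $0 \to U^* \to G(\I) \to G(\I A/U) \to 0$ of graded $R(\I)$-modules recalled in Section \ref{section-pre} then rewrites as the claimed sequence, and the formula $[\h^0_\N(G(\I))]_n = (U \cap I_n)/(U \cap I_{n+1})$ is exactly the expression for $[U^*]_n$ already computed there.

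For (ii), assume $\m U = 0$. By (i) it suffices to show $\N \cdot U^* = 0$ componentwise. Degree-zero elements of $\N$ lie in $\m$ and kill $[U^*]_n = (U \cap I_n)/(U \cap I_{n+1})$ because $\m U = 0$; any $bt^k \in R(\I)_k$ with $k \geq 1$ satisfies $b \in I_k \subseteq \m$, and hence $b \cdot (U \cap I_n) \subseteq \m U = 0$, which vanishes in $[U^*]_{n+k}$.

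The hardest step will be the inductive reduction in $(1) \Rightarrow (2)$: arranging a superficial element compatible with the filtration $\I A/U$ so that the descent to lower dimension passes cleanly through the long exact sequence in local cohomology, and propagating the termwise inequality $\lh^i(G(\I A/U)) \geq \lh^i(A/U)$ from the quotient back to $G(\I)$. The machinery of Section \ref{section-pre}, in particular Propositions \ref{proposition-lemma-3.3} and \ref{proposition-2.2-Y} and Lemma \ref{Lemma-3.5-Y}, is assembled precisely to support this kind of reduction.
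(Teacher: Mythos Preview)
Your handling of $(2)\Rightarrow(1)$, of the passage to $A/U$, and of assertions (i) and (ii) is correct and matches the paper. The gap is in your plan for $(1)\Rightarrow(2)$. The termwise inequality $\lh^i(G(\I))\geq\lh^i(A)$ cannot be extracted from the inductive reduction you describe. After modding out by a suitable element (chosen so that it annihilates the relevant cohomology) and obtaining the additivity $\lh^i(\text{quotient})=\lh^i+\lh^{i+1}$ on both sides, an inductive hypothesis that gives only \emph{inequalities} on these sums does not yield termwise inequalities on the individual $\lh^i$: from $\lh^0(G)+\lh^1(G)\geq\lh^0(A)+\lh^1(A)$ and $\lh^1(G)+\lh^2(G)\geq\lh^1(A)+\lh^2(A)$ you cannot conclude $\lh^2(G)\geq\lh^2(A)$, since any excess in $\lh^1(G)$ can absorb a deficit in $\lh^2(G)$.

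The paper avoids this by never aiming at an inequality. It first shows that the hypothesis $\mathds{I}(G(\I))=\mathds{I}(A)$ itself descends to the quotient by $a_1^m$: the additivity formulas together with Pascal's identity $\binom{d-1}{i}=\binom{d-2}{i}+\binom{d-2}{i-1}$ give $\mathds{I}(G(\I A/(a_1^m)))=\mathds{I}(A/(a_1^m))$. The induction hypothesis is then the full implication $(1)\Rightarrow(2)$ in dimension $d-1$, yielding \emph{equalities} $\lh^i(G(\I A/(a_1^m)))=\lh^i(A/(a_1^m))$ for $i<d-1$, which one unwinds starting from $\lh^0(G(\I))=0=\lh^0(A)$ (positive depth) to obtain $\lh^i(G(\I))=\lh^i(A)$ for all $i<d$. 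A second point you gloss over: the identification $G(\I)/(a_1t)^mG(\I)\cong G(\I A/(a_1^m))$ via Proposition~\ref{proposition-2.2-Y} requires the intersection condition $(a_1^m)\cap I_k=a_1^mI_{k-m}$, and this is supplied by Proposition~\ref{prop-I-equi-1} \emph{using the hypothesis} $\mathds{I}(G(\I))=\mathds{I}(A)$. A generic superficial element for $\I A/U$ need not satisfy it, so the equi-$\mathds{I}$-invariant hypothesis is already essential to set up the inductive step, not merely to convert inequalities into equalities at the end.
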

 \begin{proof}
 The last part is clear from Lemma \ref{Lemma-3.5-Y}.

\eqref{item-1-prop-3.6-Y} $\Rightarrow$ \eqref{item-2-prop-3.6-Y} We apply induction on $d$. For $d=1$, it is clear. Suppose $d\geq 2$. 
By passing to $A/U$ and using Lemma \ref{Lemma-3.5-Y}, we may assume that $\depth A>0$. 
Then $\mathds{I}(G(\I))<\infty$, so $G(\I)$ is generalized Cohen-Macaulay. 
Let $a_1t,\ldots,a_dt\in R(\I)_1$ be a system of parameters of $G(\I)$ (see Lemma \ref{lemma-4.1-Y} for existence of such $a_1,\ldots,a_d\in I_1$). 
Let $m>0$ be an integer such that $a_1^{n_1},a_2^{n_2},\ldots,a_d^{n_d}$ forms a standard system of parameters of $A$ 
for each $n_i\geq m$. By Proposition \ref{prop-I-equi-1} and Proposition \ref{proposition-2.2-Y}, $(a_1^m)\cap I_k=a_1^m I_{k-m}$ for all $k\in\mathbb{Z}$
and 
\[G(\I)/(a_1t)^{m}G(\I)\cong G(\I A/(a_1^{m}))\]
as graded $R(\I)$-modules. Since $\depth A>0$, $(a_1t)^m$ is a non-zero-divisor on $G(\I)$. Therefore 
$\lh^0(G(\I))=0.$

Now, consider the short exact sequences 
\[ 0\to A\xrightarrow{a_1^m}A\to A/(a_1^m)\to 0 \mbox{ and }\]
\[0\to G(\I)(-m)\xrightarrow{(a_1t)^m} G(\I)\rightarrow G(\I A/(a_1^m))\to 0.\]
By Proposition \ref{prop-I-equi-1}, $(a_1t)^m,\ldots,(a_dt)^m$ is an u.s.d-sequence on $G(\I)$. 
We can choose $m$ large enough so that 
 $a_1^m$ (resp. $(a_1t)^m$) annihilates the local 
cohomology modules $\h^i_\m(A)$ (resp. $\h^i_\N(G(\I))$) for $0\leq i\leq s-1$. Then, for $0\leq i<s-1$,
the following sequences are exact:
$$0\to \h^i_\m(A)\to\h^i_\m(A/(a_1^m))\to\h^{i+1}_\m(A)\to 0,$$
$$0\to \h^i_\N(G(\I))\to\h^i_\N(G(\I A/(a_1^m)))\to\h^{i+1}_\N(G(\I))(-m)\to 0.$$
Consequently, for $0\leq i<s-1$, we have
\begin{equation}
 \lh^i(A/(a_1^m))=\lh^i(A)+\lh^{i+1}(A) \text{ and}
 \lh^i(G(\I A/(a_1^m)))=\lh^i(G(\I))+\lh^{i+1}(G(\I))\label{eq-2-proposition-3.6-Y}
\end{equation}
 which gives that
$\mathds{I}(G(\I/(a_1^m)))=\mathds{I}(A/(a_1^m))$. By induction hypothesis, $\lh^i(G(\I A/(a_1^m)))=\lh^i( A/(a_1^m))$ for $0\leq i<s-1$. 
Now using  \eqref{eq-2-proposition-3.6-Y} inductively, we get that 
$\lh^i(G(\I))=\lh^i(A)$ for $0\leq i<s.$

 \eqref{item-2-prop-3.6-Y} $\Rightarrow$ \eqref{item-1-prop-3.6-Y} It follows from the definition of $\mathds{I}$-invariant. 
\end{proof}
\section{A generating set of $\N$}\label{section-generating-set-of-N}
In this section, we discuss the existence of a generating set of 
$\N$ consisting of homogeneous elements, not necessarily in the same degree, which possesses the properties of a 
$G(\I)$-basis of $\N$. 
We first recall the following definition from \cite{vogel}. 
\begin{definition}\cite[Definition 1.7]{vogel}
 Let $J$ be an ideal such that $\dim M/JM=0.$ A system of elements $a_1,\ldots,a_t$ of $A$ is called an $M$-basis of $J$ 
 if $(i)$ $a_1,\ldots,a_t$ is a minimal
 generating set of $J$ and $(ii)$ for every system $i_1,\ldots,i_s$ of integers such that $1\leq i_1<i_2<\ldots<i_s\leq t$, the 
 elements $a_{i_1},\ldots,a_{i_s}$  form a system of parameters of $M$.
\end{definition}
Let $R=\mathop\oplus\limits_{n\geq 0} R_n$ be a Noetherian graded ring and 
$N$ a Noetherian graded $R$-module. For a homogeneous ideal $\mathfrak{J}$ of $R$ with $\dim N/\mathfrak{J}N=0$, an {\textit{$N$-basis of $\mathfrak{J}$}} 
consisting of homogeneous elements can be defined in a similar way. In local case, such a basis always exists, see  \cite[Proposition 1.9, Chap I]{vogel}. 
For $k$-algebras, the existence of $N$-bases of homogeneous ideals is discussed in \cite[Section 3, Chap I]{vogel}. The main goal of this section is 
to prove Lemma \ref{lemma-4.1-Y} which gives the existence of a $G(\I)$-basis of $\N$ such that its degree zero elements form an $A$-basis of $\m$. 
First, we briefly discuss preliminary facts. In the rest of this section, 
we assume that $R=\mathop\oplus\limits_{n\geq 0} R_n$ is a Noetherian graded ring where $R_0$ is a local ring with maximal ideal 
$\mathfrak{m}_0$ and infinite residue field $R_0/\mathfrak{m}_0$. Let $\mathcal{M}=\mathfrak{m}_0\oplus\mathop\oplus\limits_{n\geq 1} R_n$. 
\begin{lemma}\label{remark-d-to-r} Let $m_1,\ldots,m_p\in M$ be a generating set of $M$ and $\{M_i~:~1 \leq i\leq k\}$ a finite collection
 of proper submodules of $M$. Then there exists an element 
 $y=\alpha_1m_1+\ldots+\alpha_pm_p\in M\setminus (M_1\cup\ldots\cup M_k)$ where
 $\alpha_j$ is either zero or a unit in $A$ for $1\leq j\leq p$. 
 \end{lemma}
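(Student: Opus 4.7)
The plan is to reduce the problem modulo $\m M$ and invoke the classical fact that a finite-dimensional vector space over an infinite field is not a finite union of proper subspaces. Set $\bar M := M/\m M$, viewed as an $(A/\m)$-vector space generated by the images $\bar m_1,\ldots,\bar m_p$; note that $A/\m$ is infinite by the global hypothesis on the residue field.

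The first step is to verify that each $M_i + \m M$ is still a proper submodule of $M$. Since $m_1,\ldots,m_p$ generate $M$, the module $M$ is finitely generated, and so is $M/M_i$. If one had $M_i + \m M = M$, then $\m \cdot (M/M_i) = M/M_i$, and Nakayama's lemma would force $M = M_i$, contradicting properness of $M_i$. Therefore each $(M_i + \m M)/\m M$ is a proper subspace of $\bar M$.

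The second step applies the standard fact that, over an infinite field, a vector space cannot be covered by finitely many proper subspaces. This yields an element $\bar y \in \bar M$ lying outside every $(M_i + \m M)/\m M$. Writing $\bar y = \sum_{j=1}^{p} \bar\alpha_j \bar m_j$ with $\bar\alpha_j \in A/\m$, lift to $\alpha_j \in A$ by choosing $\alpha_j = 0$ whenever $\bar\alpha_j = 0$, and taking $\alpha_j$ to be any preimage of $\bar\alpha_j$ otherwise; such a preimage lies outside $\m$, and since $A$ is local it is a unit. Setting $y := \sum_{j=1}^p \alpha_j m_j$, the image of $y$ in $\bar M$ is $\bar y$, hence $y \notin M_i + \m M$ for each $i$, and in particular $y \notin \bigcup_{i=1}^k M_i$.

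The argument is essentially routine prime avoidance, so there is no serious obstacle; the only points needing attention are the properness of $M_i + \m M$ (which uses finite generation of $M$ and Nakayama) and the fact that nonzero residue classes admit unit lifts (which uses locality of $A$). Both are forced by the standing hypotheses of the paper.
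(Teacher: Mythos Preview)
Your proof is correct and follows essentially the same approach as the paper: reduce modulo $\m M$, observe that each $(M_i+\m M)/\m M$ is a proper subspace of $M/\m M$, and use that a vector space over an infinite field is not a finite union of proper subspaces. The paper's proof is a terse sketch of exactly this idea, while you have filled in the justifications (Nakayama for properness, and the explicit lifting of coefficients to zero-or-unit elements) that the paper leaves implicit.
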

 \begin{proof}
 Since $\frac{M_i+\m M}{\m M}$ is a proper subspace of $\frac{M}{\m M}$ for each $i$, we have that 
 $$\frac{M_1+\m M}{ \m M} \cup \ldots \cup \frac{M_k+\m M}{\m M} \varsubsetneq \frac{M}{\m M}.$$
 \end{proof}
\begin{lemma}\label{lemma-existence-1}
Let $a_1,\ldots,a_p\in R_l$ be homogeneous elements of degree $l>0$ and 
$\mathfrak{J}_1,\ldots,\mathfrak{J}_k$ homogeneous ideals of $R$ such that $(a_1,\ldots,a_p)R\nsubseteq \mathfrak{J}_i$ for all 
$1\leq i\leq k$. Then there exists an element 
$$\alpha_1 a_1+\ldots+\ldots \alpha_pa_p\notin  \mathfrak{J}_1\cup\ldots\cup \mathfrak{J}_k $$ 
where $\alpha_j$ is either zero or a unit in $R_0$ for $1\leq j\leq p$.
\end{lemma}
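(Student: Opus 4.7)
The plan is to reduce the claim to Lemma \ref{remark-d-to-r} applied to a suitable finitely generated $R_0$-submodule. Set $W := R_0 a_1 + \cdots + R_0 a_p \subseteq R_l$, which is a finitely generated $R_0$-module, and for each $i$ set $W_i := W \cap \mathfrak{J}_i$. Since each $\mathfrak{J}_i$ is a homogeneous ideal and every $a_j$ is homogeneous of degree $l$, every element of $W$ is homogeneous of degree $l$, so $W_i = W \cap [\mathfrak{J}_i]_l$. Consequently, an element of $W$ lies in $\mathfrak{J}_i$ if and only if it lies in $W_i$, and it suffices to find an element of the stated form in $W$ that avoids $W_1 \cup \cdots \cup W_k$.

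First I would verify that each $W_i$ is a \emph{proper} $R_0$-submodule of $W$. Indeed, if $W = W_i$, then in particular each generator $a_j$ lies in $\mathfrak{J}_i$, whence $(a_1,\ldots,a_p) R \subseteq \mathfrak{J}_i$, contradicting the hypothesis. Hence $W_i \subsetneq W$ for every $1 \leq i \leq k$.

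Next, since $R_0$ is a local ring with infinite residue field (the standing assumption of this section), Lemma \ref{remark-d-to-r} is available for $R_0$-modules. Applying it to the module $W$, the generating set $a_1,\ldots,a_p$, and the proper submodules $W_1,\ldots,W_k$, I obtain an element $\alpha_1 a_1 + \cdots + \alpha_p a_p \in W \setminus (W_1 \cup \cdots \cup W_k)$ with each $\alpha_j$ either zero or a unit in $R_0$. By the first paragraph this element avoids every $\mathfrak{J}_i$, which is exactly what was required.

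There is no substantial obstacle here: the argument is essentially a graded repackaging of the local avoidance lemma, with the homogeneity of the $\mathfrak{J}_i$ allowing the reduction from an ideal-theoretic statement in $R$ to a submodule-avoidance statement inside the single graded piece $R_l$. The only minor point to watch is that the avoidance is achieved at the level of the ambient ideals, not merely modulo $\mathfrak{m}_0 R_l$; but because each $W_i$ is a genuine $R_0$-submodule of $W$ (not just a $k$-subspace of $W/\mathfrak{m}_0 W$), invoking Lemma \ref{remark-d-to-r} directly delivers the ideal-level conclusion, and the hypothesis $l>0$ plays no role in this particular step.
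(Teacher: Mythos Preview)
Your proof is correct and follows essentially the same approach as the paper: define the $R_0$-submodule $W$ generated by the $a_j$, set $W_i = W \cap \mathfrak{J}_i$, observe these are proper submodules (since otherwise all generators $a_j$ would lie in $\mathfrak{J}_i$), and invoke Lemma~\ref{remark-d-to-r}. The paper's proof is slightly terser but the argument is the same.
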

\begin{proof}
Let $M$ be the $R_0$-submodule of $R_l$ generated by $a_1,\ldots,a_p$ and $M_i=M\cap \mathfrak{J}_i$ for $i=1,\ldots,k$.
Since $(a_1,\ldots,a_p)R\nsubseteq \mathfrak{J}_i$, there exists $a_{i_j}$ for each $\mathfrak{J}_i$ such that $a_{i_j}\notin \mathfrak{J}_i$.
So, $M_i$ is a proper submodule of $M$. Now the conclusion follows from Lemma \ref{remark-d-to-r}.
\end{proof}
For a Noetherian graded $R$-module $N$, we now prove the existence of an $N$-basis of a homogeneous ideal of $R$ generated in degree $g$. Our result is a generalization of 
 \cite[Proposition 1.9, Chap I]{vogel}.
\begin{proposition}\label{proposition-basis-existence-1}
 Let $\mathfrak{J}\subseteq R$ be an ideal generated by homogeneous elements
of degree $g$. Let $N_1,\ldots, N_k$ be Noetherian graded $R$-modules with $\dim N_i/\mathfrak{J}N_i=0$ for $1\leq i\leq k.$ Then there 
exists a system of homogeneous elements of degree $g$ forming $N_i$-basis of $\mathfrak{J}$ for all
$1\leq i\leq k.$
\end{proposition}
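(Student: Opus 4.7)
The plan is to construct $b_1,\ldots,b_t$ iteratively, using the submodule-avoidance Lemma \ref{remark-d-to-r} at each step. First fix any minimal homogeneous generating set $c_1,\ldots,c_t\in R_g$ of $\mathfrak{J}$, so $t=\mu(\mathfrak{J})$, and let $M\subseteq R_g$ denote the $R_0$-submodule they span. Each $b_r$ will be an element of $M$ of the form $\sum_l\alpha_{rl}c_l$ with every $\alpha_{rl}$ either zero or a unit of $R_0$. Writing $d_i=\dim N_i$, the invariants I maintain after step $r$ are: (a) the images of $b_1,\ldots,b_r$ in $\mathfrak{J}/\mathcal{M}\mathfrak{J}$ are $R_0/\mathfrak{m}_0$-linearly independent, and (b) for every $i\in\{1,\ldots,k\}$ and every $J\subseteq\{1,\ldots,r\}$ with $|J|\le d_i$, the equality $\dim N_i/(b_j:j\in J)N_i=d_i-|J|$ holds.

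At the inductive step, with $b_1,\ldots,b_{r-1}$ already chosen, let $\mathcal{P}$ collect all homogeneous primes $\mathfrak{p}\neq\mathcal{M}$ in $\supp_R(N_i)$ that arise as a minimal prime of $(b_j:j\in J)R+\ann_R(N_i)$ for some $i\in\{1,\ldots,k\}$ and some $J\subseteq\{1,\ldots,r-1\}$ with $|J|<d_i$; this is a finite set. The hypothesis $\dim N_i/\mathfrak{J}N_i=0$ forces every homogeneous prime in $\supp(N_i)$ that contains $\mathfrak{J}$ to equal $\mathcal{M}$, hence $\mathfrak{J}\nsubseteq\mathfrak{p}$ for each $\mathfrak{p}\in\mathcal{P}$, and consequently each $M\cap\mathfrak{p}$ is a proper $R_0$-submodule of $M$. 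By invariant (a) together with $r-1<t$, the submodule $M\cap\bigl((b_1,\ldots,b_{r-1})+\mathcal{M}\mathfrak{J}\bigr)$ is likewise proper in $M$. Applying Lemma \ref{remark-d-to-r} to $M$ with this finite collection of proper submodules produces a $b_r$ that preserves (a) and extends (b), because avoiding every minimal prime of each relevant partial ideal ensures the dimension strictly decreases when $b_r$ is adjoined to any partial parameter system of length less than $d_i$.

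After $t$ steps, invariant (a) combined with $t=\mu(\mathfrak{J})$ shows that $b_1,\ldots,b_t$ is a minimal generating set of $\mathfrak{J}$, while invariant (b) with $|J|=d_i$ says that every $d_i$-subset forms a system of parameters of $N_i$. This is precisely the simultaneous $N_i$-basis condition required.

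The main obstacle is the dimension-counting needed to verify properness of all the submodules fed to Lemma \ref{remark-d-to-r}. The prime-avoidance step rests essentially on $\dim N_i/\mathfrak{J}N_i=0$: without this hypothesis a non-maximal minimal prime over a partial ideal might swallow all of $\mathfrak{J}$, leaving no unit-coefficient combination of the $c_l$'s that escapes it. The minimality condition must be handled inside the same avoidance step rather than afterwards, since any subsequent adjustment risks destroying the carefully-arranged parameter conditions.
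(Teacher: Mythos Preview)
Your argument is correct and follows essentially the same inductive prime-avoidance strategy as the paper's proof: at each step one avoids the finitely many relevant primes in the supports of the $N_i/(b_j)N_i$ together with one extra obstruction ensuring minimality, invoking Lemma~\ref{remark-d-to-r} (the paper routes through Lemma~\ref{lemma-existence-1}, which is just Lemma~\ref{remark-d-to-r} specialized to this situation). The only cosmetic difference is that you track minimality via linear independence modulo $\mathcal{M}\mathfrak{J}$, whereas the paper replaces the original generators $b_m$ one at a time; both devices accomplish the same thing.
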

\begin{proof}
 Let $\mathfrak{J}=(b_1,\ldots,b_t)R$ with $t=\mu(\mathfrak{J}_\mathcal{M})$ and $\deg b_i=g$ for $1\leq i\leq t.$ We may assume that $\dim N_i=s_i>0$ 
  and $t\geq s_i$ for each $i$. 

{\it{Claim 1.}} For each $0\leq m\leq t$, there exist homogeneous elements $a_1,\ldots,a_m\in \mathfrak{J}$ such that either $a_i=0$ or 
$\deg a_i=g$ for $1\leq i\leq m$ and the following conditions hold:
\begin{enumerate}
 \item \label{key-prop-1} $\mathfrak{J}=(a_1,\ldots,a_m,b_{m+1},\ldots,b_t)R$ and
 \item \label{key-prop-2} for all $l=1,\ldots,k$, for all $j=0,\ldots,\min(s_l,m)$ and all $1\leq i_1< \ldots < i_j\leq m$, $a_{i_1},\ldots,a_{i_j}$
is a part of a system of parameters of $N_l$.
\end{enumerate}
\begin{proof}[Proof of Claim 1] We prove by induction on $m$. For $m=0$, this is trivial. Now let $0<m\leq t$ and there exist elements
$a_1,\ldots,a_{m-1}$ as stated in  Claim 1 such that \eqref{key-prop-1} and \eqref{key-prop-2} are satisfied.
Define
\begin{align*} L=\{&\p\in Spec(R):\p \text{ is homogeneous and there are } l,j,i_1,\ldots,i_j, 1\leq l\leq k, 0\leq j\leq \min(s_l,m),\\
&1\leq i_1<\ldots<i_j<m \text { with } \p\in \supp N_l/(a_{i_1},\ldots,a_{i_j})N_l) \text{ and } \dim R/\p=s_l-j \}.\end{align*}
Clearly,  $\mathfrak{J}\nsubseteq \p$ for all $\p\in L$ and $\mathfrak{J} \nsubseteq (a_1,\ldots,a_{m-1})R$. 
By Lemma \ref{lemma-existence-1}, there exists an element, say $a_m$, such that
\[a_m=\alpha_1 a_1+\ldots+\alpha_{m-1}a_{m-1}+\alpha_{m}b_m+\ldots\alpha_tb_t\notin (a_1,\ldots,a_{m-1})R\cup\mathop\cup\limits_{\p\in L} \p\] with 
$\alpha_j$ is either zero or a unit in $R_0$ for $1\leq j\leq t$. Then $\alpha_j\neq 0$ for some $m\leq j\leq t$. We may assume that 
$j=m$ after rearrangements if needed. 
Then $b_m\in (a_1,\ldots,a_m,b_{m+1},\ldots,b_t)R \Rightarrow\mathfrak{J}=(a_1,\ldots,a_m,b_{m+1},\ldots,b_t)R$. 
In this case, $\deg a_m=g$ and \eqref{key-prop-2} holds for the choice of $a_m$.  
\end{proof}
Now, $\{a_1,\ldots,a_t\}$ is the desired basis of $\mathfrak{J}$.
\end{proof}
\begin{proposition}\label{proposition-basis-existence-2}
 Let $N$ be a Noetherian graded $R$-module of dimension $s>0$. Let 
$\mathfrak{J}_1,\ldots,\mathfrak{J}_m$ be ideals of $R$ generated by homogeneous elements of degrees
$g_1,\ldots,g_m$ respectively and $\dim N/\mathfrak{J}_iN=0$ for $1\leq i\leq m.$
Let $\mathfrak{B}_1$ be an 
$N$-basis of $\mathfrak{J}_1$ consisting of homogeneous elements of degree $g_1$. Then there exist $N$-bases $\mathfrak{B}_i$ of $\mathfrak{J}_i$ consisting of 
homogeneous elements of degree $g_i$ for $1\leq i\leq m$ such that any $s$ elements of the set $\mathop\bigcup\limits_{i=1}^m\mathfrak{B}_i$ form a
system of parameters of $N$. 
\end{proposition}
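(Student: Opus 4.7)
The plan is to build $\mathfrak{B}_2,\ldots,\mathfrak{B}_m$ inductively, preserving at each stage the property that any $s$ elements of the accumulated union form a system of parameters of $N$. Given $\mathfrak{B}_1,\ldots,\mathfrak{B}_{i-1}$ satisfying the property, I will construct $\mathfrak{B}_i=\{a_{i,1},\ldots,a_{i,t_i}\}$ one element at a time, in the spirit of Claim 1 in the proof of Proposition \ref{proposition-basis-existence-1}. Starting from a homogeneous minimal generating set $\mathfrak{J}_i=(b_{i,1},\ldots,b_{i,t_i})R$ with $b_{i,\nu}\in R_{g_i}$ and $t_i=\mu(\mathfrak{J}_i)$, suppose $a_{i,1},\ldots,a_{i,j-1}$ have been chosen so that $\mathfrak{J}_i=(a_{i,1},\ldots,a_{i,j-1},b_{i,j},\ldots,b_{i,t_i})R$ and so that any $s$ elements of $\mathfrak{B}_1\cup\cdots\cup\mathfrak{B}_{i-1}\cup\{a_{i,1},\ldots,a_{i,j-1}\}$ form a system of parameters of $N$.

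To pick $a_{i,j}$, I collect the finite set $L$ of homogeneous primes $\p$ that arise as minimal primes of $N/(S)N$ with $\dim R/\p=s-|S|>0$, where $S$ ranges over subsets of $\mathfrak{B}_1\cup\cdots\cup\mathfrak{B}_{i-1}\cup\{a_{i,1},\ldots,a_{i,j-1}\}$ of cardinality at most $s-1$ that form parts of a system of parameters of $N$. Applying Lemma \ref{lemma-existence-1} to $a_{i,1},\ldots,a_{i,j-1},b_{i,j},\ldots,b_{i,t_i}$ (viewed as generators of an $R_0$-submodule of $R_{g_i}$) and to the primes in $L$ then yields an $R_0$-linear combination $a_{i,j}$ avoiding $\bigcup_{\p\in L}\p$. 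After relabelling the $b$'s so that $b_{i,j}$ has a unit coefficient in $a_{i,j}$, the relation $\mathfrak{J}_i=(a_{i,1},\ldots,a_{i,j},b_{i,j+1},\ldots,b_{i,t_i})R$ is preserved, and the new element extends each relevant part of a system of parameters to a longer one. The induction on $j$ produces $\mathfrak{B}_i$, and the outer induction on $i$ completes the construction.

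The crux that makes Lemma \ref{lemma-existence-1} applicable is the claim $\mathfrak{J}_i\not\subseteq\p$ for each $\p\in L$; this is where the hypothesis $\dim N/\mathfrak{J}_iN=0$ enters, since any homogeneous prime in $\supp N$ that contains $\mathfrak{J}_i$ has $\dim R/\p=0$, while every $\p\in L$ has positive dimension $s-|S|$. The main obstacle I anticipate is the bookkeeping required to simultaneously maintain minimality of $\mathfrak{B}_i$ as a generating set of $\mathfrak{J}_i$ and the parameter condition across the ever-growing candidate pool. This is controlled by the same combinatorial argument as in the proof of Proposition \ref{proposition-basis-existence-1}: at every step $L$ is finite, so the avoidance lemma produces the required element, and the construction terminates after $t_i$ steps.
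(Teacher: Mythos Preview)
Your proposal is correct and follows essentially the same approach as the paper: induction on $m$, with the inductive step producing $\mathfrak{B}_i$ so that its elements avoid the relevant primes coming from quotients $N/(S)N$ for subsets $S$ of the previously constructed bases. The only difference is packaging: the paper applies Proposition~\ref{proposition-basis-existence-1} directly to the finite family $\{N/JN\}$ of such quotients to obtain $\mathfrak{B}_m$ in one stroke, whereas you unfold that argument and build $\mathfrak{B}_i$ element by element via Lemma~\ref{lemma-existence-1} (in doing so, remember to also include $(a_{i,1},\ldots,a_{i,j-1})R$ among the ideals to avoid, so that some $b_{i,\ell}$ with $\ell\geq j$ receives a unit coefficient).
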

\begin{proof}
We apply induction on $m$. The case $m=1$ is trivial. Suppose that the result holds for $m-1$.  
Consider the family $\mathfrak{F}$ of quotient modules $N/JN$ where $J$ is the ideal generated by any $k$ elements of the set $\mathop\bigcup\limits_{i=1}^{m-1}\mathfrak{B}_i$
for $0\leq k\leq s$. Now by Proposition \ref{proposition-basis-existence-1}, there exists a system of homogeneous elements of degree 
$g_m$, say $\mathfrak{B}_m$, which forms $N/JN$-basis of $\mathfrak{J}_m$ for all $N/JN\in \mathfrak{F}$. 
This completes the proof.
\end{proof}
We now use the above results for $R(\I)$ and $G(\I)$ to obtain a generating set of $\N$ with the properties 
$(2)$-$(4)$ described in next lemma. 
We regard the Rees algebra $R(\I)=\mathop\oplus\limits_{n\geq 0}I_n$ as the $A$-subalgebra of 
 the polynomial ring $A[t]$. Let $\R_{\geq c}=\mathop\oplus\limits_{n\geq c}I_n$, $c\geq 1$ be the graded submodule of $R(\I)$ and
 $\beta$ be an integer such that $I_{n+1}=I_1I_n$ for all $n\geq \beta.$ Recall that $\nu:=\mu(\m)$.
\begin{lemma}\label{lemma-4.1-Y}
There exist elements $x_1,\ldots,x_\nu\in\m$ and $a_{ij}\in I_i$, $1\leq j\leq u_i$ for some integers $u_i\geq 0$ and for $1\leq i\leq \beta$  which satisfy the 
following conditions:
\begin{enumerate}
 \item\label{item-1} $x_1,\ldots,x_{\nu},a_{11}t,\ldots,a_{1u_1}t,\ldots,a_{ij}t^i,\ldots,a_{\beta u_{\beta}}t^\beta$ is a minimal generating set of 
 $\mathcal{N}$,
 \item\label{item-2} any $d$ elements from the set $\{a_{ij}t^i: 1\leq j\leq u_i,1\leq i\leq \beta\}$ is a system of parameters of $G(\I)$,
 \item\label{item-3} $x_1,\ldots,x_\nu$ is a minimal generating set of $\m$ and
 \item\label{item-4} any $d$ elements from $\{x_1,\ldots,x_\nu, a_{ij}: 1\leq j\leq u_i,1\leq i\leq \beta\}$ is a system of parameters of $A$.
 \end{enumerate}
\end{lemma}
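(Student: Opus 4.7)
My plan is to construct the elements inductively on degree, using Lemma \ref{remark-d-to-r} as a prime-avoidance engine at each stage. Since $I_{n+1}=I_1 I_n$ for $n\geq\beta$, every element of $\N$ in degree $n>\beta$ already lies in $\N^2$, so a minimal homogeneous generating set of $\N$ is concentrated in degrees $0\leq i\leq \beta$: it consists of a minimal generating set of $\m$ in degree $0$ (contributing $\nu$ elements), and of $u_i:=\dim_{A/\m}\bigl(I_i/(\m I_i+\sum_{k=1}^{i-1}I_k I_{i-k})\bigr)$ elements in each degree $i\geq 1$.

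In degree $0$, I apply Lemma \ref{remark-d-to-r} to $M=\m$ with the proper submodules being $\m^2$ together with $\m\cap\p$ for the finitely many primes $\p\in\Spec A$ with $\dim A/\p\geq 1$ that would obstruct a subset of the generating set from extending to a system of parameters. Since $A/\m$ is infinite, this produces a minimal generating set $x_1,\ldots,x_\nu$ of $\m$ such that any $d$ of them form a system of parameters of $A$; this delivers \eqref{item-3} and the base step of \eqref{item-4}.

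For the inductive step, fix $i\in\{1,\ldots,\beta\}$ and suppose that all generators of degree $<i$, together with the first $j-1$ generators in degree $i$, have already been chosen in accordance with the four conditions so far. I then choose $a_{ij}\in I_i$ subject to three prime-avoidance requirements: (a) its class in $V_i:=I_i/(\m I_i+\sum_{k=1}^{i-1}I_k I_{i-k})$ should be linearly independent from those of $a_{i1},\ldots,a_{i,j-1}$, securing \eqref{item-1}; (b) the element $a_{ij}t^i\in R(\I)_i$ should lie outside every minimal prime $\p\neq\N$ of every ideal of $R(\I)$ generated by any $(d-1)$-subset of the previously chosen elements $\{a_{kj'}t^k\}$, securing \eqref{item-2}; and (c) $a_{ij}$ itself should lie outside every $\q\in\Spec A$ with $\dim A/\q\geq 1$ that is minimal over a $(d-1)$-subset of $\{x_1,\ldots,x_\nu\}\cup\{a_{kj'}:(k,j')<(i,j)\}$, securing \eqref{item-4}. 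Each of these forbidden loci intersects $I_i$ in a proper $A$-submodule, and only finitely many loci appear in total, so Lemma \ref{remark-d-to-r} applied to a fixed generating set of $I_i$ yields the desired $a_{ij}$ as a $0$-or-unit $A$-linear combination.

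The hard part will be the simultaneous bookkeeping: the single element $a_{ij}$ must satisfy prime-avoidance conditions in two different rings---$A$ for \eqref{item-4} and $R(\I)$ for \eqref{item-2}---and moreover in two different gradings (as a degree-$0$ element of $A$, and as the degree-$i$ element $a_{ij}t^i$ of $R(\I)$). The key observation that makes it go through is that, after intersecting with $I_i$, both sets of conditions become conditions about avoiding finitely many proper $A$-submodules of the same ambient $A$-module $I_i$, which is exactly the hypothesis of Lemma \ref{remark-d-to-r}. Once the induction terminates at $i=\beta$, all four conclusions of the lemma hold by construction.
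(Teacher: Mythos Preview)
Your plan is sound and takes a genuinely different route from the paper. The paper works in the opposite order: it first constructs the positive-degree generators by applying the graded $N$-basis machinery (Propositions~\ref{proposition-basis-existence-1} and~\ref{proposition-basis-existence-2}) to auxiliary ideals $\mathfrak{J}_i\subseteq R(\I)$ generated in each degree $i$, obtaining bases $\mathfrak{B}_i$ whose union has the property that any $d$ elements form a system of parameters of $G(\I)$; it then notes that the underlying $a_{ij}\in A$ automatically form systems of parameters of $A$, and only afterward chooses the degree-$0$ part $x_1,\ldots,x_\nu$ by invoking the local result \cite[Proposition~1.9, Chap.~I]{vogel} simultaneously for the finite family of quotients $A/J$ with $J$ ranging over ideals generated by subsets of the $a_{ij}$. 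Your single-pass construction bypasses both the abstract propositions and the family-of-quotients trick by the observation you single out at the end: the graded prime-avoidance constraint in $R(\I)$ and the ungraded one in $A$, once intersected with $I_i$, both amount to avoiding finitely many proper $A$-submodules of the same module $I_i$, so one call to Lemma~\ref{remark-d-to-r} handles both. This is more direct; the paper's route is more modular, since Propositions~\ref{proposition-basis-existence-1}--\ref{proposition-basis-existence-2} stand on their own as graded analogues of Vogel's basis-existence result.

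Two points need tightening before the argument is complete. In (b) you must avoid primes in $\supp_{R(\I)}G(\I)$ of the appropriate dimension over \emph{every} $k$-subset with $0\le k\le d-1$, not only $(d-1)$-subsets: while you are choosing the first $d-1$ of the $a_{kj'}t^k$ there are no $(d-1)$-subsets at all, so your stated condition is vacuous and nothing prevents those early elements from lying in a minimal prime of $G(\I)$. (For condition (c) this issue does not arise, because by the time you reach the $a_{ij}$ you already have $\nu\ge d$ elements $x_1,\ldots,x_\nu$ available, so every $(d-1)$-subset extends to a $d$-subset that is a system of parameters by the base step.) The paper handles this carefully via the set $L$ in Claim~1 of Proposition~\ref{proposition-basis-existence-1}, which ranges over all $j\le\min(s_l,m)$. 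Second, the relevant primes in (b) are those in $\supp_{R(\I)}G(\I)$, not arbitrary minimal primes of ideals of $R(\I)$; the distinction matters since $\dim R(\I)=d+1$. With these adjustments your construction goes through.
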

\begin{proof}
Since $\R_{\geq \beta}$ has basis in degree $\beta$, 
there exists a $G(\I)$-basis, say $\mathfrak{B}_\beta=\{a_{\beta 1}t^\beta,\ldots,a_{\beta w_{\beta}}t^\beta\}$, of $\R_{\geq \beta}$ by 
Proposition \ref{proposition-basis-existence-1}. 
Let $b_{i 1},\ldots,b_{iv_i}$ be a minimal generating set of $I_i$ for $1\leq i\leq{\beta-1}$. Consider the ideal 
$\mathfrak{J}_{i}=(b_{i 1}t^i,\ldots,b_{iv_i}t^i)R(\I)$ for $1\leq i\leq{\beta-1}$. Then, 
$(\mathfrak{J}_i)_n= I_nt^n  \text{ for } n\gg 0$ 
which implies that $\dim G(\I)/\mathfrak{J}_iG(\I)=0$ for
$1\leq i\leq \beta-1$. By Proposition \ref{proposition-basis-existence-2}, there exist $G(I)$-bases, say $\mathfrak{B}_i=\{a_{i 1}t^i,\ldots,a_{iw_i}t^i\}$ of 
$\mathfrak{J}_i$, consisting of homogeneous elements of degree $i$ for all $1\leq i\leq \beta-1$ such that any $d$ elements from the set 
$\mathop\bigcup\limits_{i=1}^\beta\mathfrak{B}_i=\{ a_{ij}t^i ~|~ 1\leq j\leq w_i, 1\leq i\leq \beta\}$
is a system of parameters of $G(\I)$. Clearly, the set $\mathop\bigcup\limits_{i=1}^\beta\mathfrak{B}_i$ generates $\R_+$. We choose 
$\mathfrak{B}^\prime\subseteq \mathop\bigcup\limits_{i=1}^t\mathfrak{B}_i$ a minimal generating
set of $\R_+$. Let 
$$\mathfrak{B}^\prime=\{ a_{11}t,\ldots,a_{1u_1}t,\ldots,a_{ij}t^i,\ldots,a_{\beta u_{\beta}}t^\beta\}.$$ 
Then \eqref{item-2} holds. Further, any $d$ elements from $\{a_{ij}; 1\leq j\leq u_i, 1\leq i\leq \beta\}$ is a system of parameters of $A$.
Let $\mathfrak{F}$ be the family of the quotient modules $A/J$ where $J$ is an ideal generated by any $k$ elements of the set 
$\{a_{ij}: 1\leq j\leq u_i,1\leq i\leq \beta\}$ for 
$0\leq k\leq d$. By \cite[Proposition 1.9, Chap I]{vogel}, there exists an $A/J$-basis $x_1,\ldots,x_\nu$ of $\m$ for each $A/J\in\mathfrak{F}$. Thus
we obtain \eqref{item-1}-\eqref{item-4}.
\end{proof}
\section{Buchsbaumness of $G(\I)$}\label{section-main-proof}
In this section, we discuss the proof of Theorem \ref{theorem-intro-2}. Suppose $A$ is a Buchsbaum local ring and $Q=(a_1,\ldots,a_d)$ is a 
reduction of $\I$ with $I_{n+1}=QI_n$ for $n\geq r$. Then \eqref{main-theorem-item-1} implies \eqref{main-theorem-item-2} is obvious and the
equivalence of \eqref{main-theorem-item-2} and \eqref{main-theorem-item-3} follows from Proposition 
\ref{prop-I-equi-1} since $a_1^{n_1},\ldots,a_d^{n_d}$ is an unconditioned d-sequence for all $n_i\geq 1$, see Remark \ref{remark-from-trung-huneke}.
The aim of this section is to prove Theorem \ref{main-theorem-last-section} stated below
which, along with Proposition  \ref{proposition-3.6-Y}, completes the proof of \eqref{main-theorem-item-2} $\Rightarrow$ \eqref{main-theorem-item-1}.
An important ingredient of our proof is Lemma \ref{lemma-4.1-Y} which provides a desired
generating set of $\N$ for defining Koszul complex on $\N$. 
We use \cite[Theorem 2.15, Chap I]{vogel} for our proof which provides a sufficient condition for the Buchsbaumness of $G(\I)$
using canonical maps between the Koszul (co)homology modules and the local cohomology modules of $G(\I)$. 

We first fix some notations. Let $K^\cdotp(\mathcal{N};G(\I))$ denote the Koszul (co)complex on a minimal generating set of the ideal $\mathcal{N}$.
Let $x_1,\ldots,x_\nu\in\m$ and $a_{ij}t^i\in R(\I)_i$ for $1\leq j\leq u_i$, $1\leq i\leq \beta$ be a minimal generating set of $\mathcal{N}$ such 
that all the conditions in Lemma \ref{lemma-4.1-Y} are satisfied. Then $K^\cdotp(\mathcal{N};G(\I))$ is the Koszul complex 
 on the above system of elements up to isomorphism and we have 
 \[ K^\cdotp(\N;G(\I))=\smashoperator[r]{\mathop\bigoplus_{\substack{\Gamma\subseteq [1,v], \Lambda_j\subseteq [1,u_j],\\ 1\leq j\leq \beta}}} G(\I)e_\Gamma^{\Lambda_1\ldots \Lambda_{\beta}} 
\text{ with }  K^i(\N;G(\I))=\smashoperator[r]{\mathop\bigoplus_{\substack{|\Gamma|+|\Lambda_1|+\ldots+|\Lambda_{\beta}|=i,\\ 
\Gamma\subseteq [1,v], \Lambda_j\subseteq [1,u_j], 1\leq j\leq \beta}}} G(\I)e_\Gamma^{\Lambda_1 \Lambda_2\ldots \Lambda_{\beta}} \]
 where $\{e_\Gamma^{\Lambda_1 \Lambda_2\ldots \Lambda_{\beta}} ~|~\Gamma\subseteq [1,v] \text{ and } \Lambda_j\subseteq [1,u_j] \text{ for } 1\leq j\leq \beta\}$ is the graded free basis 
 with $\deg e_\Gamma^{\Lambda_1\Lambda_2\ldots \Lambda_{\beta}}=-(|\Lambda_1|+2|\Lambda_2|+\ldots+\beta|\Lambda_\beta|).$ We write $\h^i\big(\mathcal{N};G(\I)\big)$ for the homology modules 
 $\h^i\big(K^\cdotp(\mathcal{N};G(\I))\big)$. The notations $\underline{x(n)}$ and $\underline{a_it^i(n)}$ are used for the 
 sequences $x_1^n,\ldots,x_{\nu}^n$ and $(a_{i1}t^i)^n,\ldots,(a_{iu_i}t^i)^n$ respectively for $1\leq i\leq\beta$. 
 \begin{theorem}\label{main-theorem-last-section}
  Suppose that $A$ is a Buchsbaum local ring  and the equality $\mathds{I}(G(\I))=\mathds{I}(A)$ holds. Then the associated graded ring $G(\I)$ is Buchsbaum over $R(\I)$. 
 \end{theorem}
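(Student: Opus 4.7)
The plan is to apply the Stückrad--Vogel criterion \cite[Theorem 2.15, Chap I]{vogel}: a generalized Cohen--Macaulay module is Buchsbaum if and only if the canonical maps from Koszul cohomology to local cohomology are surjective in degrees below the dimension. Since $\mathds{I}(G(\I))=\mathds{I}(A)<\infty$, Proposition \ref{proposition-3.6-Y} already gives $\lh^i(G(\I))=\lh^i(A)$ for $0\le i<d$, so $G(\I)$ is generalized Cohen--Macaulay; and Theorem \ref{theorem-quasi-buchs} furnishes the needed annihilation $\N\cdot \h^i_\N(G(\I))=0$ for $0\le i<d$. It remains to verify the surjectivity of the comparison maps from the Koszul complex $K^{\cdotp}(\N;G(\I))$ to $\h^{\cdotp}_\N(G(\I))$.

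As a first reduction I would pass to $A/U$ with $U=\h^0_\m(A)$. Since $A$ is Buchsbaum we have $\m U=0$, and by Proposition \ref{proposition-3.6-Y}(i) the exact sequence
\[0\to \h^0_\N(G(\I))\to G(\I)\to G(\I A/U)\to 0\]
together with Lemma \ref{Lemma-3.5-Y}(iii) reduces the problem to the case $\depth A>0$, at which point Proposition \ref{proposition-3.6-Y} also forces $\lh^0(G(\I))=0$, i.e.\ $G(\I)$ is a module of positive depth. Next I would fix a minimal generating set of $\N$ of the form $\{x_1,\dots,x_\nu,\,a_{ij}t^i\}$ provided by Lemma \ref{lemma-4.1-Y}, so that the $a_{ij}t^i$ contain systems of parameters of $G(\I)$ and the $x_k$ form an $A$-basis of $\m$. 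The corresponding Koszul cocomplex splits as a direct sum of pieces indexed by subsets $\Gamma\subseteq[1,\nu]$ and $\Lambda_i\subseteq[1,u_i]$; on each summand the differential involves only multiplication by the chosen generators, and because of the Buchsbaum assumption on $A$ and the equi-$\mathds{I}$-invariant hypothesis, the intersection formulas of Proposition \ref{prop-I-equi-1} (through Proposition \ref{proposition-2.2-Y}) control the kernels and images computationally.

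The argument proper would proceed by induction on $d$. The base case $d=1$ is direct: once $\depth A>0$ we have $\h^0_\N(G(\I))=0$, and $\h^1_\N(G(\I))$ is killed by $\N$, so the Koszul cocomplex surjects onto $\h^1_\N(G(\I))$ because the target is annihilated by every generator of $\N$. For the inductive step, pick $a_1t\in R(\I)_1$ from the generating set in Lemma \ref{lemma-4.1-Y} and a large enough $m$ so that $a_1^m,\dots,a_d^m$ is an unconditioned strong d-sequence on $A$; Proposition \ref{prop-I-equi-1} and Proposition \ref{proposition-2.2-Y} give
\[G(\I)/(a_1t)^m G(\I)\ \cong\ G(\I A/(a_1^m)).\]
Standardness of the system of parameters on $A$ shows that $A/(a_1^m)$ is again Buchsbaum and $\mathds{I}(G(\I A/(a_1^m)))=\mathds{I}(A/(a_1^m))$ via the computations of $\lh^i$ used in the proof of Proposition \ref{proposition-3.6-Y}, so the inductive hypothesis applies to $G(\I A/(a_1^m))$. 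Transporting the surjectivity of Koszul-to-local-cohomology maps across the short exact sequence
\[0\to G(\I)(-m)\xrightarrow{(a_1t)^m} G(\I)\to G(\I A/(a_1^m))\to 0\]
and using that $(a_1t)^m$ annihilates each $\h^i_\N(G(\I))$ (quasi-Buchsbaumness plus $\N^{m}\subseteq (a_1t)^m\N$ up to the other generators) yields the surjectivity for $G(\I)$ itself.

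The main obstacle I anticipate is the last transport step: because the generating set produced by Lemma \ref{lemma-4.1-Y} lives in several degrees, the Koszul complex of $\N$ is not the Koszul complex on a degree-one system of parameters, so the clean matching between Koszul cohomology pieces and local cohomology pieces that Yamagishi exploits in \cite{Yam1} must be reworked. Concretely, the grading shifts $\deg e_\Gamma^{\Lambda_1\dots\Lambda_\beta}=-\sum i|\Lambda_i|$ have to be reconciled with the shifts appearing in the exact sequences above, and one needs the $x_k$'s and the $a_{ij}t^i$'s to interact properly with the annihilator structure of $\h^i_\N(G(\I))$. This is exactly where the full strength of Lemma \ref{lemma-4.1-Y}, which guarantees that any subfamily of the chosen generators is a partial system of parameters both of $A$ and of $G(\I)$, is used to carry out the bookkeeping and close the induction.
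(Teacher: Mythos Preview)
Your overall strategy---Stückrad--Vogel criterion, induction on $d$, and the short exact sequence coming from a degree-one parameter element---matches the paper's, but there are two genuine gaps.

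First, the reduction to $\depth A>0$ is not the free step you treat it as. Knowing that $G(\I A/U)$ is Buchsbaum and that $0\to \h^0_\N(G(\I))\to G(\I)\to G(\I A/U)\to 0$ is exact does \emph{not} by itself give Buchsbaumness of $G(\I)$. What one needs (cf.\ the proof of \cite[Theorem 2.15, Chap I]{vogel}) is that the maps $\tau^i_A:\h^i(\N;\h^0_\N(G(\I)))\to \h^i(\N;G(\I))$ are injective for all $0\le i\le d$, including $i=d$. For $i\le d-1$ this follows by induction on $d$ after passing to $A/(a_{11})$, but the case $i=d$ requires a separate argument: one takes a Koszul $d$-cocycle $\xi$ with values in $\h^0_\N(G(\I))$ that becomes a coboundary in $K^{\cdotp}(\N;G(\I))$ and must show $\xi=0$. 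This is where the full force of Lemma \ref{lemma-4.1-Y} is actually used: the mixed generating set $\{x_j\}\cup\{a_{ij}t^i\}$ must be manipulated term by term, exploiting that any $d$ of the $x_j,a_{ij}$ form a d-sequence in $A$ (Buchsbaumness of $A$) and that the intersection formulas of Proposition \ref{proposition-2.2-Y} hold. The paper carries this out through an explicit combinatorial lemma and a descending induction on $|\Lambda_1|$; nothing in your outline covers this, and your final paragraph mislocates the difficulty as a grading bookkeeping issue in the transport step rather than as the injectivity of $\tau^d_A$.

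Second, in the inductive step you pass to $A/(a_1^m)$ and assert it is Buchsbaum. For $m>1$ this is false in general: quotients of Buchsbaum rings by powers of parameters need not be Buchsbaum. The fix is immediate---since $A$ is Buchsbaum, every system of parameters is already a d-sequence, so one may take $m=1$ throughout (this is exactly what the paper does, using $f=a_{11}t$ rather than $(a_{11}t)^m$)---but as written the claim is wrong. Once you set $m=1$, your treatment of the $\depth A>0$ case agrees with the paper's; the substantive missing piece is the $\depth A=0$ argument above.
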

\begin{proof}
Suppose $A$ is Buchsbaum and $\mathds{I}(G(\I))=\mathds{I}(A)$. Then $G(\I)$ is quasi-Buchsbaum by Theorem \ref{theorem-quasi-buchs}. We prove that 
$G(\I)$ is Buchsbaum by induction on $d$. We may assume that $d\geq 2$.
By \cite[Theorem 2.15, Chap I]{vogel}, it is enough to prove that the canonical map 
\[\phi^i_{G(\I)}: \h^i\big(\mathcal{N};G(\I)\big)\longrightarrow \varinjlim \h^i\big(\underline{x(n)},\underline{a_it^i(n)};G(\I)\big)\cong \h_{\mathcal{N}}^i(G(\I))\]
is surjective for all $0\leq i< d$.  

Since any system of parameters is a d-sequence in $A$, using Proposition \ref{proposition-2.2-Y} and Proposition \ref{prop-I-equi-1}, 
we get that \begin{equation} 
             \label{eq-main-proof-1}G(\I A^\prime)\cong G(\I)/fG(\I)\end{equation} 
             where $A^\prime=A/(a_{11}), f=a_{11}t$ and $\I A^\prime$ denote the 
filtration $\{I_n+(a_{11})/(a_{11})\}_{n\geq 0}$ of $A^\prime$.
Since $A$ and $G(\I)$ are quasi-Buchsbaum, $\m \h^i_\m(A)=0$ and $\N\h^i_{\N}(G(\I))=0$ for $0\leq i<d$. 
Then it is easy to see that the equality $\mathds{I}(A^\prime)=\mathds{I}(G(\I)/fG(\I))=\mathds{I}(G(\I A^\prime))$ holds, see \cite[Lemma 1.7]{trung-gcm}. 
Therefore, by induction hypothesis, $G(\I A^\prime)$ is 
Buchsbaum. 

Now suppose $\depth A>0$. Then $a_{11}$ is a non-zero-divisor. In view of \eqref{eq-main-proof-1},  
$f$ is a non-zero-divisor on $G(\I)$.  The exact sequence 
\[0\longrightarrow G(\I)(-1)\xrightarrow{f} G(\I)\to G(\I  A^\prime)\longrightarrow 0\]
of graded $R(\I)$-modules gives the following commutative diagram 
\[\begin{tikzcd}
H^{i-1}(\mathcal{N};G(\I A^\prime)) \arrow{r} \arrow[swap]{d}{\phi^{i-1}_{G(\I A^\prime)}} & H^{i}(\mathcal{N};G(\I))(-1) \arrow{r}{f} \arrow{d}{\phi^{i}_{G(\I)(-1)}} & 0 \\
\h_{\mathcal{N}}^{i-1}(G(\I A^\prime)) \arrow{r} & \h_{\mathcal{N}}^{i}(G(\I))(-1) \arrow{r}{f}  & 0
\end{tikzcd}
\]
where the rows are exact and $\phi^{i-1}_{G(\I A^\prime)}$ is surjective for $0\leq i-1< d-1$. This implies the surjectivity of $\phi^{i}_{G(\I)}$
for $0<i<d$. Thus $G(\I)$ is a Buchsbaum module over $R(\I)$. 

Now suppose $\depth A=0.$ Then, by Proposition \ref{proposition-3.6-Y}, we have that $\h_N^0(G(\I))\neq 0$, $\mathds{I}(A/U)=\mathds{I}(G(\I A/U))$ and the following
is a short exact sequence  
\begin{align}\label{eq-main-proof-2} 0\longrightarrow \h^0_{\mathcal{N}}(G(\I))\longrightarrow G(\I)\longrightarrow G(\I A/U)\longrightarrow 0
 \end{align}
of graded $R(\I)$-modules. Since $A/U$ is Buchsbaum and $\depth A/U>0$, $G(\I A/U)$ is Buchsbaum by the previous case.
So, $\phi^i_{G(\I A/U)}$ is surjective for $0\leq i<d$. In order to prove the surjectivity of $\phi^i_{G(\I)}$, it is enough to show that 
the canonical maps 
\[\tau^i_A:\h^i(\mathcal{N};\h^0_{\mathcal{N}}(G(\I)))\longrightarrow \h^i(N;G(\I))\]
 are injective for $0\leq i\leq d$, see proof of \cite[Theorem 2.15, Chap I]{vogel}. 
First, we discuss the case $0\leq i\leq d-1$. Consider the following commutative diagram with canonical maps:
\[\begin{tikzcd}
\h^0_{\mathcal{N}}(G(\I)) \arrow{r}{\tau_A} \arrow[swap]{d}{\sigma} & G(\I)\arrow{d} \\
\h^0_{\mathcal{N}}(G(\I A^\prime))  \arrow{r}{\tau_{A^\prime}} & G(\I A^\prime)
\end{tikzcd}
\]
It follows from Remark \ref{remark-from-trung-huneke} and Proposition \ref{prop-I-equi-1} that $a_{11}t,\ldots,a_{1d}t$ is a d-sequence on $G(\I)$
and $\mathds{I}(G(\I))=\mathds{I}((a_{11}t,\ldots,a_{1d}t);G(\I)).$ Therefore $\h^0_{\mathcal{N}}(G(\I))\cap fG(\I)=0$ which implies that 
$\tau_{A^\prime}\circ \sigma$ is injective. Since $\h^0_{\mathcal{N}}(G(\I))$ and $\h^0_{\mathcal{N}}(G(\I A^\prime))$ are vector spaces over $R(\I)/\mathcal{N}$, the vertical map 
$\sigma$ splits. Therefore,  $\sigma^i$ splits in the following commutative diagram:
\[\begin{tikzcd}
H^{i}(\mathcal{N};\h^0_{\mathcal{N}}(G(\I))) \arrow{r}{\tau^i_A} \arrow[swap]{d}{\sigma^i} & H^{i}(\mathcal{N};G(\I))\arrow{d} \\
H^{i}(\mathcal{N}; \h^0_{\mathcal{N}}(G(\I A^\prime)))  \arrow{r}{\tau^i_{A^\prime}} & H^{i}(\mathcal{N};G(\I A^\prime))
\end{tikzcd}
\]
By induction hypothesis, the map $\tau^i_{A^\prime}$ is injective for $0\leq i\leq d-1$. Hence $\tau^i_A$ is injective for $0\leq i\leq d-1$. 
Now we prove the injectivity of 
   \[\tau^d_A:H^d(\mathcal{N}; \h^0_{\mathcal{N}}(G(\I))) \longrightarrow H^d(\mathcal{N};G(\I)).\]
By \eqref{eq-main-proof-2}, we have the following exact sequence of graded Koszul complexes
\begin{equation}\label{eq-main-proof-3} 0\to K^\cdotp(\N;\h^0_\N(G(\I)))\to K^\cdotp(\N;G(\I))\to K^\cdotp(\N;G(\I A/U))\to 0.\end{equation}
Since $\N\h^0_\N(G(\I))=0$, all the differentials of the Koszul complex $K^\cdotp(\N;\h^0_\N(G(\I)))$ are zero maps. We get the following commutative diagram:
\[\begin{tikzcd}
0 \arrow{r}&  K^{d-1}(\mathcal{N};\h^0_{\mathcal{N}}(G(\I))) \arrow{r}{} \arrow[swap]{d}{0} & K^{d-1}(\mathcal{N};G(\I))\arrow{d}{\partial} \\
0\arrow{r}&  K^{d}(\mathcal{N}; \h^0_{\mathcal{N}}(G(\I)))  \arrow{r}{} & K^d(\mathcal{N};G(\I)))
\end{tikzcd}
\]
Let $\xi\in K^{d}(\mathcal{N}; \h^0_{\mathcal{N}}(G(\I)))$ be a homogeneous element with $\deg \xi=n$ such that there exists a homogeneous element 
$\eta\in  K^{d-1}(\mathcal{N};G(\I))$ with $\xi=\partial(\eta)\in K^d(\mathcal{N};G(\I)))$. We show that $\xi=0$. Then it follows that $\tau^d_A$ is injective.
Write $\xi$ and $\eta$ as follows: 
\[\xi=\smashoperator[r]{\sum_{\substack{|\Gamma|+|\Lambda_1|+\ldots+|\Lambda_\beta|=d,\\ \Gamma\subseteq [1,\nu], \Lambda_j\subseteq [1,u_j], 1\leq j\leq \beta}}}\xi_\Gamma^{\Lambda_1\ldots \Lambda_\beta} e_\Gamma^{\Lambda_1\ldots \Lambda_\beta} 
\quad \textrm{and} \quad \eta=\smashoperator[r]{\sum_{\substack{|Q|+|P_1|+\ldots+|P_\beta|=d-1,\\ Q\subseteq [1,\nu], P_j\subseteq [1,u_j],1\leq j\leq \beta}}}\eta_Q^{P_1\ldots P_\beta} e_Q^{P_1\ldots P_\beta}\]
where $ \xi_\Gamma^{\Lambda_1\ldots \Lambda_\beta} \in[G(\I)]_{n+|\Lambda_1|+2|\Lambda_2|+\ldots+\beta|\Lambda_\beta|}$ and 
$\eta_Q^{P_1\ldots P_\beta}\in [G(\I)]_{n+|P_1|+2|P_2|+\ldots+\beta|P_\beta|}$. 
Since $\xi=\partial(\eta)$, we get 
\begin{align}\xi_\Gamma^{\Lambda_1\ldots \Lambda_\beta}=\sum_{j\in \Gamma} (-1)^{\Gamma(j)}x_j \eta_{\Gamma\setminus j}^{\Lambda_1\ldots \Lambda_\beta}+\sum_{k=1}^\beta\sum_{i\in \Lambda_k}(-1)^{|\Gamma|+|\Lambda_1|+\ldots+|\Lambda_{k-1}|+\Lambda_k(i)} a_{ki}t^k \eta_{\Gamma}^{\Lambda_1\ldots \Lambda_{k-1}\Lambda_k\setminus\{i\}\Lambda_{k+1}\ldots \Lambda_\beta}\label{new-eqn-v5-1}\end{align}
where $\Gamma(j):=|\{j^\prime\in \Gamma| j^\prime <j\}|$ and $\Lambda_k(i):=|\{i^\prime\in \Lambda_k|i^\prime <i\}|.$ 
Note that $\xi_\Gamma^{\Lambda_1\ldots \Lambda_\beta}\in \h^0_{\mathcal{N}}(G(\I))$.
We show that $\xi_\Gamma^{\Lambda_1\ldots \Lambda_\beta}=0$  for all 
$\Gamma,\Lambda_1,\ldots,\Lambda_\beta$ with $|\Gamma|+|\Lambda_1|+\ldots+|\Lambda_\beta|=d$.

Let $c_Q^{P_1,\ldots,P_\beta}\in I_{n+|P_1|+2|P_2|+\ldots+\beta|P_\beta|}$ be a representative of $\eta_Q^{P_1,\ldots,P_\beta}$, i.e., 
$\overline{c_Q^{P_1,\ldots,P_\beta}}= \eta_Q^{P_1,\ldots,P_\beta}$ in $[G(\I)]_{n+|P_1|+2|P_2|+\ldots+\beta|P_\beta|}$.
Then the element
\begin{align}
 b_\Gamma^{\Lambda_1\ldots \Lambda_\beta}:={}&\sum_{j\in \Gamma} (-1)^{\Gamma(j)}x_j c_{\Gamma\setminus\{j\}}^{\Lambda_1\ldots \Lambda_\beta}+\sum_{k=1}^\beta\sum_{i\in \Lambda_k}(-1)^{|\Gamma|+|\Lambda_1|+\ldots+|\Lambda_{k-1}|+\Lambda_k(i)}a_{ki} c_{\Gamma}^{\Lambda_1\ldots \Lambda_{k-1}\Lambda_k\setminus\{i\}\Lambda_{k+1}\ldots \Lambda_\beta}\label{eq-main-proof-4}\\ 
&\in{} I_{n+|\Lambda_1|+2|\Lambda_2|+\ldots+\beta|\Lambda_\beta|}\nonumber
 \end{align}
 and $\overline{b_\Gamma^{\Lambda_1\ldots \Lambda_\beta}}=\xi_\Gamma^{\Lambda_1\ldots \Lambda_\beta}$ in $[G(\mathcal{I})]_{n+|\Lambda_1|+2|\Lambda_2|+\ldots+\beta|\Lambda_\beta|}$. This representation $b_{\Gamma}^{\Lambda_1\ldots \Lambda_{\beta}}$ of $\xi_\Gamma^{\Lambda_1\ldots \Lambda_\beta}$ depends on the choice of representations $c_Q^{P_1,\ldots,P_\beta}$
 of $\eta_Q^{P_1\ldots P_\beta}$ in \eqref{new-eqn-v5-1}. Using the following lemma,  
 we will conclude that $b_\Gamma^{\Lambda_1\ldots \Lambda_\beta}=0$ after a suitable change of representations $c_Q^{P_1\ldots P_\beta}$. Thus 
 $\xi_\Gamma^{\Lambda_1\ldots \Lambda_\beta}=0$. 
\begin{lemma}\label{Lemma-inside-main-proof} 
Let $\Gamma\subseteq [1,v], \Lambda_j\subseteq [1,u_j], 1\leq j\leq \beta$ with $|\Gamma|+|\Lambda_1|+\ldots+|\Lambda_\beta |=d$ and $b_\Gamma^{\Lambda_1\ldots \Lambda_\beta}\in I_{n+|\Lambda_1|+2|\Lambda_2|+\ldots+\beta|\Lambda_\beta|}$ be 
same as described in \eqref{eq-main-proof-4} such that $\overline{b_\Gamma^{\Lambda_1\ldots \Lambda_\beta}}=\xi_\Gamma^{\Lambda_1\ldots \Lambda_\beta}$. Suppose that there exists a subset $\Lambda_1^\prime$ of $[1,u_1]$ such that the following conditions are satisfied:
 \begin{enumerate}
  \item $\Lambda_1 \subseteq  \Lambda_1^\prime $ and $|\Lambda_1^\prime|=|\Lambda_1|+1,$
  \item $b_{\Gamma^\prime}^{\Lambda_1^\prime \Lambda_2\ldots \Lambda_k}=0$ for all $\Gamma^\prime\subseteq \Gamma$ such that $|\Gamma^\prime|=|\Gamma|-1$ and 
  \item $b_\Gamma^{\Lambda_1^\prime \Lambda_2\ldots \Lambda_{k-1}\Lambda_k^\prime \Lambda_{k+1} \Lambda_\beta}=0$ for all $\Lambda_k^\prime\subseteq \Lambda_k$ such that $|\Lambda_k^\prime|=|\Lambda_k|-1$ for 
  $2\leq k\leq \beta$.
 \end{enumerate}
Then, $b_\Gamma^{\Lambda_1\Lambda_2\ldots \Lambda_\beta}=0$ after a suitable change of representations $c_\Gamma^{\Lambda_1\setminus \{i\}\Lambda_2\ldots \Lambda_\beta}$ 
for $i\in \Lambda_1$ in \eqref{eq-main-proof-4}.
\end{lemma}
\begin{proof}[Proof of Lemma]
 First consider the case when $|\Lambda_1|=d$. Then $\Gamma=\Lambda_k=\phi$ for $2\leq k\leq \beta$ and 
 \[\xi_\Gamma^{\Lambda_1\ldots \Lambda_\beta}=\mathop\sum\limits_{i\in \Lambda_1}(-1)^{\Lambda_1(i)}a_{1i}t\eta_\Gamma^{\Lambda_1\setminus\{i\}\Lambda_2\ldots \Lambda_\beta}\in (a_{1i}t:i\in \Lambda_1)G(\I)\cap \h^0_{\N}(G(\I))=0\]
since $\{a_{1i}t:i\in \Lambda_1\}$ is a d-sequence on $G(\I)$ by Proposition \ref{prop-I-equi-1}. Thus 
\[b_\Gamma^{\Lambda_1\ldots \Lambda_\beta}= \mathop\sum\limits_{i\in \Lambda_1}(-1)^{\Lambda_1(i)}a_{1i}c_\Gamma^{\Lambda_1\setminus\{i\}\Lambda_2\ldots \Lambda_\beta}\in (a_{1i}:i\in \Lambda_1)\cap I_{n+|\Lambda_1|+1}=(a_{1i}:i\in \Lambda_1) I_{n+|\Lambda_1|}\]
 where the last equality follows from Proposition \ref{lemma-5.1/trung} and the fact that 
 \[\mathds{I}(A)=\mathds{I}(G(\I))\geq \mathds{I}((a_{11}t,\ldots,a_{1d}t);G(\I))\geq \mathds{I}((a_{11},\ldots,a_{1d});A)=\mathds{I}(A).\] Let $g_i\in I_{n+|\Lambda_1|}$ such that
$b_\Gamma^{\Lambda_1,\ldots,\Lambda_\beta}=\mathop\sum\limits_{i\in \Lambda_1}a_{1i}g_i.$  Then, for each $i\in \Lambda_1$, 
\[\eta_\Gamma^{\Lambda_1\setminus\{i\}\Lambda_2\ldots \Lambda_\beta}=\overline{c_\Gamma^{\Lambda_1\setminus\{i\}\Lambda_2\ldots \Lambda_\beta}}=\overline{c_\Gamma^{\Lambda_1\setminus\{i\}\Lambda_2\ldots \Lambda_\beta}-g_i}\]
in $G(\I)_{n+|\Lambda_1|-1}.$ Therefore the assertion follows after replacing $c_\Gamma^{\Lambda_1\setminus\{i\}\Lambda_2\ldots \Lambda_\beta}$ by 
$c_\Gamma^{\Lambda_1\setminus\{i\}\Lambda_2\ldots \Lambda_\beta}-g_i$, for each $i\in \Lambda_1$, for the representative of $\eta_\Gamma^{\Lambda_1\setminus\{i\}\Lambda_2\ldots \Lambda_\beta}$ in the 
expression \eqref{eq-main-proof-4} of $b_\Gamma^{\Lambda_1,\ldots,\Lambda_\beta}$.

Now, suppose $|\Lambda_1|<d$. Then $\Gamma\neq \phi$ or $\Lambda_k\neq \phi$ for some $2\leq k\leq \beta$. It is enough to consider the case when  $\Gamma\neq \phi$. 
Similar arguments will hold when $\Lambda_k\neq \phi$ for some $k$. 
For an integer $k$, we write $S(k)$ for the sum $|\Lambda_1|+\ldots+|\Lambda_k|$.
Now, for each $j\in \Gamma,$ we have that 
\begin{align*}
 0=b_{\Gamma\setminus \{j\}}^{\Lambda_1^\prime \Lambda_2\ldots \Lambda_\beta}=&\sum_{j^\prime\in \Gamma\setminus \{j\}} (-1)^{\Gamma\setminus\{j\}(j^\prime)}x_{j^\prime} c_{\Gamma\setminus\{j,j^\prime\}}^{\Lambda_1^\prime \Lambda_2\ldots \Lambda_\beta}
+ \sum_{i\in \Lambda_1^\prime}(-1)^{|\Gamma|-1+\Lambda_1^\prime(i)} a_{1i} c_{\Gamma\setminus \{j\}}^{\Lambda_1^\prime\setminus\{i\}\Lambda_2\ldots \Lambda_\beta}\nonumber\\
 &+ \sum_{k=2}^\beta\sum_{i\in \Lambda_k}(-1)^{|\Gamma|-1+S(k-1)+1+\Lambda_k(i)}a_{ki} c_{\Gamma\setminus\{j\}}^{\Lambda_1^\prime \Lambda_2\ldots \Lambda_{k-1}\Lambda_k\setminus\{i\}\Lambda_{k+1}\ldots \Lambda_\beta}\\
\end{align*}
where the notation $\Gamma\setminus\{j\}(j^\prime)$ is used for the cardinality $|\{j^{\prime\prime}\in \Gamma\setminus\{j\} |j^{\prime\prime}< j^\prime \}|=|\{j^{\prime\prime}\in \Gamma| j\neq j^{\prime\prime}< j^\prime \}|
$.
Multiplying by $(-1)^{\Gamma(j)}x_j$ and taking the sum $\mathop\sum\limits_{j\in \Gamma}$, we get that
\begin{align}
 0={}&\sum_{j\in \Gamma} \sum_{j^\prime\in \Gamma\setminus \{j\}}(-1)^{\Gamma(j)}(-1)^{\Gamma\setminus\{j\}(j^\prime)}x_jx_{j^\prime} c_{\Gamma\setminus\{j,j^\prime\}}^{\Lambda_1^\prime \Lambda_2\ldots \Lambda_\beta}
 +\sum_{i\in \Lambda_1^\prime}(-1)^{|\Gamma|-1+\Lambda_1^\prime(i)} a_{1i} \sum_{j\in \Gamma} (-1)^{\Gamma(j)} x_jc_{\Gamma\setminus \{j\}}^{\Lambda_1^\prime\setminus\{i\}\Lambda_2\ldots \Lambda_\beta}\nonumber\\
&+ \sum_{k=2}^\beta\Big(\sum_{i\in \Lambda_k}(-1)^{|\Gamma|+S(k-1)+\Lambda_k(i)}a_{ki} \sum_{j\in \Gamma} (-1)^{\Gamma(j)} x_jc_{\Gamma\setminus\{j\}}^{\Lambda_1^\prime \Lambda_2\ldots \Lambda_{k-1}\Lambda_k\setminus\{i\}\Lambda_{k+1}\ldots \Lambda_\beta}\Big)\nonumber\\
\begin{split}\label{last-part-lemma-eqn-1}
={}& \sum_{i\in \Lambda_1^\prime}(-1)^{|\Gamma|-1+\Lambda_1^\prime(i)} a_{1i} \sum_{j\in \Gamma} (-1)^{\Gamma(j)} x_jc_{\Gamma\setminus \{j\}}^{\Lambda_1^\prime\setminus\{i\}\Lambda_2\ldots \Lambda_\beta}\\
& + \sum_{k=2}^\beta\Big(\sum_{i\in \Lambda_k}(-1)^{|\Gamma|+S(k-1)+\Lambda_k(i)}a_{ki} \sum_{j\in \Gamma} (-1)^{\Gamma(j)} x_jc_{\Gamma\setminus\{j\}}^{\Lambda_1^\prime \Lambda_2\ldots \Lambda_{k-1}\Lambda_k\setminus\{i\}\Lambda_{k+1}\ldots \Lambda_\beta}\Big)
\end{split}
\end{align}
as the first term $\mathop\sum\limits_{j\in \Gamma} \sum\limits_{j^\prime\in \Gamma\setminus \{j\}}(-1)^{\Gamma(j)}(-1)^{\Gamma\setminus\{j\}(j^\prime)}x_jx_{j^\prime} c_{\Gamma\setminus\{j,j^\prime\}}^{\Lambda_1^\prime \Lambda_2\ldots \Lambda_\beta}=0$.
Further, for each  $i\in \Lambda_k$, $2\leq k\leq \beta$, we have that
\begin{align}
0={}&b_{\Gamma}^{\Lambda_1^\prime \Lambda_2\ldots \Lambda_{k-1}\Lambda_k\setminus \{i\} \Lambda_{k+1}\ldots \Lambda_\beta}\nonumber\\
\begin{split}\label{last-part-lemma-eqn-2}
={}& \sum_{j\in \Gamma} (-1)^{\Gamma(j)}x_{j} c_{\Gamma\setminus\{j\}}^{\Lambda_1^\prime \Lambda_2\ldots \Lambda_{k-1}\Lambda_k\setminus \{i\}\Lambda_{k+1}\ldots \Lambda_\beta}
 +\sum_{i^\prime\in \Lambda_1^\prime}(-1)^{|\Gamma|+\Lambda_1^\prime(i^\prime)} a_{1i^\prime} c_{\Gamma}^{\Lambda_1^\prime\setminus\{i^\prime\}\Lambda_2\ldots \Lambda_{k-1} \Lambda_k\setminus\{i\} \Lambda_{k+1}\ldots \Lambda_\beta}\\
&+ \sum_{k^\prime=2}^{k-1}\sum_{i^\prime\in \Lambda_{k^\prime}}(-1)^{|\Gamma|+S(k^\prime-1)+1+\Lambda_{k^\prime}(i^\prime)}a_{k^\prime i^\prime}c_{\Gamma}^{\Lambda_1^\prime \Lambda_2\ldots \Lambda_{k^\prime-1}\Lambda_{k^\prime}\setminus\{i^\prime\}\Lambda_{k^\prime+1}\ldots \Lambda_{k}\setminus\{i\} \Lambda_{k+1}\ldots \Lambda_\beta}\\
&+\sum_{i^\prime\in \Lambda_{k}\setminus\{i\}}(-1)^{|\Gamma|+S(k-1)+1+\Lambda_k\setminus\{i\}(i^\prime)}a_{ki^\prime} c_{\Gamma}^{\Lambda_1^\prime \Lambda_2\ldots \Lambda_{k-1}\Lambda_k\setminus\{i,i^\prime\}\Lambda_{k+1}\ldots \Lambda_\beta}\\
&+ \sum_{k^\prime=k+1}^\beta\sum_{i^\prime\in \Lambda_{k^\prime}}(-1)^{|\Gamma|+S(k^\prime-1)+\Lambda_{k^\prime}(i^\prime)} a_{k^\prime i^\prime} c_{\Gamma}^{\Lambda_1^\prime \Lambda_2\ldots \Lambda_{k-1}\Lambda_k\setminus\{i\}\Lambda_{k+1}\ldots \Lambda_{k^\prime-1} \Lambda_{k^\prime}\setminus\{i^\prime\}\Lambda_{k^\prime+1}\ldots \Lambda_\beta}.
\end{split}
\end{align}
Again, we multiply the last three terms by $\mathop\sum\limits_{i\in \Lambda_k}(-1)^{|\Gamma|+S(k-1)+\Lambda_k(i)}a_{ki}$ and take the sum $\sum_{k=2}^\beta$ to obtain the following
expression which is zero.

{\it{Claim 2.}}  We have that 
\begin{align*}
 &\sum_{k=2}^\beta\Big(\sum_{i\in \Lambda_k}(-1)^{|\Gamma|+S(k-1)+\Lambda_k(i)}a_{ki}\Big((-1)^{|\Gamma|}\sum_{i^\prime\in \Lambda_{k}\setminus \{i\}}(-1)^{S(k-1)+1+\Lambda_k\setminus\{i\}(i^\prime)}a_{ki^\prime} c_{\Gamma}^{\Lambda_1^\prime \Lambda_2\ldots \Lambda_{k-1}\Lambda_k\setminus\{i,i^\prime\}\Lambda_{k+1}\ldots \Lambda_\beta}\nonumber\\
 &+ \sum_{k^\prime=2}^{k-1}\sum_{i^\prime\in \Lambda_{k^\prime}}(-1)^{S(k^\prime-1)+1+\Lambda_{k^\prime}(i^\prime)}a_{k^\prime i^\prime}c_{\Gamma}^{\Lambda_1^\prime \Lambda_2\ldots \Lambda_{k^\prime-1}\Lambda_{k^\prime}\setminus\{i^\prime\}\Lambda_{k^\prime+1}\ldots \Lambda_{k}\setminus\{i\} \Lambda_{k+1}\ldots \Lambda_\beta}\nonumber\\
&+\sum_{k^\prime=k+1}^\beta\sum_{i^\prime\in \Lambda_{k^\prime}}(-1)^{S(k^\prime-1)+\Lambda_{k^\prime}(i^\prime)} a_{k^\prime i^\prime} c_{\Gamma}^{\Lambda_1^\prime \Lambda_2\ldots \Lambda_{k-1}\Lambda_k\setminus\{i\}\Lambda_{k+1}\ldots \Lambda_{k^\prime-1} \Lambda_{k^\prime}\setminus\{i^\prime\}\Lambda_{k^\prime+1}\ldots \Lambda_\beta}
\Big)\Big)=0.
\end{align*}
\begin{proof}[Proof of Claim 2] First note that, 
\begin{align*}
&\sum_{i\in \Lambda_k}(-1)^{|\Gamma|+S(k-1)+\Lambda_k(i)}a_{ki} \sum_{i^\prime\in \Lambda_{k}\setminus \{i\}}(-1)^{|\Gamma|+S(k-1)+1+\Lambda_k\setminus\{i\}(i^\prime)}a_{ki^\prime} c_{\Gamma}^{\Lambda_1^\prime \Lambda_2\ldots \Lambda_{k-1}\Lambda_k\setminus\{i,i^\prime\}\Lambda_{k+1}\ldots \Lambda_\beta}\\
={}&\sum_{i\in \Lambda_k}\mathop\sum\limits_{i^\prime\in \Lambda_{k}\setminus \{i\}}(-1)^{\Lambda_k(i)+1+\Lambda_k\setminus\{i\}(i^\prime)}a_{ki} a_{ki^\prime} c_{\Gamma}^{\Lambda_1^\prime \Lambda_2\ldots \Lambda_{k-1}\Lambda_k\setminus\{i,i^\prime\}\Lambda_{k+1}\ldots \Lambda_\beta}\\
={}&0
\end{align*}
for each $2\leq k\leq \beta$. Further, let $2\leq l< m\leq \beta$ and $i\in \Lambda_l, j\in \Lambda_m$. Then the coefficients of $a_{li}a_{mj}$ in
the expression of  Claim 2 is 
\begin{align*}
 ((-1)^{2|\Gamma|+S(l-1)+\Lambda_l(i)+S(m-1)+\Lambda_m(j)}+(-1)^{2|\Gamma|+S(m-1)+\Lambda_m(j)+S(l-1)+1+\Lambda_l(i)})c=0
\end{align*}
where $c=c_\Gamma^{\Lambda_1^\prime \Lambda_2\ldots \Lambda_{l-1}\Lambda_l\setminus\{i\}\Lambda_{l+1}\ldots \Lambda_{m-1}\Lambda_m\setminus\{j\}\Lambda_{m+1}\ldots \Lambda_\beta}$.
This completes the proof of  Claim 2.
\end{proof}
Suppose $i_0\in\Lambda_1^\prime$ such that $\Lambda_1=\Lambda_1^\prime\setminus\{i_0\}$.  Putting the value of $\sum_{j\in \Gamma} (-1)^{\Gamma(j)}x_{j} c_{\Gamma\setminus\{j\}}^{\Lambda_1^\prime \Lambda_2\ldots \Lambda_{k-1}\Lambda_k\setminus \{i\}\Lambda_{k+1}\ldots \Lambda_\beta}$ 
from \eqref{last-part-lemma-eqn-2} into \eqref{last-part-lemma-eqn-1} for each $2\leq k\leq \beta$ and using Claim 2, we get that 
\begin{align}
 &(-1)^{|\Gamma|-1+\Lambda_1^\prime(i_0)} a_{1i_0}\Big( \sum_{j\in \Gamma} (-1)^{\Gamma(j)} x_jc_{\Gamma\setminus \{j\}}^{\Lambda_1\ldots \Lambda_\beta}+ 
  \sum_{k=2}^\beta\sum_{i\in \Lambda_k}(-1)^{|\Gamma|+S(k-1)+\Lambda_k(i)}a_{ki}  c_{\Gamma}^{\Lambda_1\ldots \Lambda_{k-1} \Lambda_k\setminus\{i\} \Lambda_{k+1}\ldots \Lambda_\beta}  \Big)\nonumber\\
=& \sum_{k=2}^\beta\Big(\sum_{i\in \Lambda_k}(-1)^{|\Gamma|+S(k-1)+\Lambda_k(i)}a_{ki}\sum_{i^\prime\in \Lambda_1^\prime, i^\prime\neq i_0}(-1)^{|\Gamma|+\Lambda_1^\prime(i^\prime)} a_{1i^\prime} c_{\Gamma}^{\Lambda_1^\prime\setminus\{i^\prime\}\Lambda_2\ldots \Lambda_{k-1} \Lambda_k\setminus\{i\} \Lambda_{k+1}\ldots \Lambda_\beta}\Big)\nonumber\\
 &-\sum_{i\in \Lambda_1^\prime, i\neq i_0}(-1)^{|\Gamma|-1+\Lambda_1^\prime(i)} a_{1i} \sum_{j\in \Gamma} (-1)^{\Gamma(j)} x_jc_{\Gamma\setminus \{j\}}^{\Lambda_1^\prime\setminus\{i\}\Lambda_2\ldots \Lambda_\beta} \nonumber\\
\in& (a_{1i}:i\in \Lambda_1)A.\label{eee-1}
\end{align}
Since $A$ is Buchsbaum,
$(a_{1i}: i\in \Lambda_1): a_{1i_0}=(a_{1i}: i\in \Lambda_1): \m\subseteq (a_{1i}: i\in \Lambda_1): x_{j^\prime}$ for $j^\prime\in \Gamma$.
We rewrite the expression in \eqref{eq-main-proof-4} and use \eqref{eee-1} to get 
\begin{align*}
 b_{\Gamma}^{\Lambda_1\ldots,\Lambda_\beta} -\sum_{i\in \Lambda_1}(-1)^{|\Gamma|+\Lambda_1(i)}a_{1i}  c_{\Gamma}^{\Lambda_1\setminus\{i\} \Lambda_{2}\ldots \Lambda_\beta}={}&
 \sum_{j\in \Gamma} (-1)^{\Gamma(j)} x_jc_{\Gamma\setminus \{j\}}^{\Lambda_1\Lambda_2\ldots \Lambda_\beta}\\ 
   &+ \sum_{k=2}^\beta\sum_{i\in \Lambda_k}(-1)^{|\Gamma|+S(k-1)+\Lambda_k(i)}a_{ki}  c_{\Gamma}^{\Lambda_1\ldots \Lambda_{k-1} \Lambda_k\setminus\{i\} \Lambda_{k+1}\ldots \Lambda_\beta}\\
  \in{}& ((a_{1i}: i\in \Lambda_1): a_{1i_0}) \cap (x_j,a_{ki}~|~j\in \Gamma, i\in \Lambda_k, 1\leq k \leq \beta)\\
  \subseteq{}&((a_{1i}: i\in \Lambda_1): x_{j^\prime}) \cap (x_j,a_{ki}~|~j\in \Gamma, i\in \Lambda_k, 1\leq k \leq \beta)\\
  ={}&(a_{1i}: i\in \Lambda_1) 
\end{align*}
where the equality holds since $\{x_j,a_{ki}:j\in \Gamma, i\in \Lambda_k, 1\leq k\leq \beta\}$ is a d-sequence in $A$, see \cite[Proposition 2.1]{huneke82}.
Now, 
\begin{align*}
b_{\Gamma}^{\Lambda_1\ldots,\Lambda_\beta} -\sum_{i\in \Lambda_1}(-1)^{|\Gamma|+\Lambda_1(i)}a_{1i}  c_{\Gamma}^{\Lambda_1\setminus\{i\} \Lambda_{2}\ldots \Lambda_\beta}&\in
(a_{1i}: i\in \Lambda_1) \cap I_{n+|\Lambda_1|+2|\Lambda_2|+\ldots+\beta |\Lambda_\beta|}\\
&=(a_{1i}: i\in \Lambda_1) I_{n+|\Lambda_1|+2|\Lambda_2|+\ldots+\beta |\Lambda_\beta|-1} 
\end{align*}
using Proposition \ref{lemma-5.1/trung}. This implies that 
$\overline{b_\Gamma^{\Lambda_1,\ldots,\Lambda_\beta}}=\xi_\Gamma^{\Lambda_1,\ldots,\Lambda_\beta}\in (a_{1i}t: i\in \Lambda_1)G(\I)\cap \h^0_\N(G(\I))=0$
since $\{a_{1i}t:i\in \Lambda_1\}$ is a d-sequence on $G(\I)$ by Proposition \ref{prop-I-equi-1}. 
Hence \[b_\Gamma^{\Lambda_1,\ldots,\Lambda_\beta}\in (a_{1i}: i\in \Lambda_1) \cap I_{n+|\Lambda_1|+2|\Lambda_2|+\ldots+\beta |\Lambda_\beta|+1}=(a_{1i}: i\in \Lambda_1)I_{n+|\Lambda_1|+2|\Lambda_2|+\ldots+\beta |\Lambda_\beta|}\] 
using Proposition \ref{lemma-5.1/trung} again. Let $g_i\in I_{n+|\Lambda_1|+2|\Lambda_2|+\ldots+\beta |\Lambda_\beta|}$ such that
$b_\Gamma^{\Lambda_1,\ldots,\Lambda_\beta}=\mathop\sum\limits_{i\in \Lambda_1}a_{1i}g_i.$ Then, for each $i\in \Lambda_1$, 
\[\eta_\Gamma^{\Lambda_1\setminus\{i\}\Lambda_2\ldots \Lambda_\beta}=\overline{c_\Gamma^{\Lambda_1\setminus\{i\}\Lambda_2\ldots \Lambda_\beta}}=\overline{c_\Gamma^{\Lambda_1\setminus\{i\}\Lambda_2\ldots \Lambda_\beta}-g_i}\]
in $G(\I)_{n+|\Lambda_1|+2|\Lambda_2|+\ldots+\beta |\Lambda_\beta|-1}.$ Therefore, for each $i\in \Lambda_1$, we may replace $c_\Gamma^{\Lambda_1\setminus\{i\}\Lambda_2\ldots \Lambda_\beta}$ by 
$c_\Gamma^{\Lambda_1\setminus\{i\}\Lambda_2\ldots \Lambda_\beta}-g_i$ for the representative of $\eta_\Gamma^{\Lambda_1\setminus\{i\}\Lambda_2\ldots \Lambda_\beta}$ in the expression \eqref{eq-main-proof-4}
of $b_\Gamma^{\Lambda_1,\ldots,\Lambda_\beta}$ which completes the proof.
\end{proof}
{\textbf{(Proof of Theorem \ref{main-theorem-last-section} continued.)}}
Let us fix  $\Gamma\subseteq [1,\nu]$, $\Lambda_1\subseteq[1,u_1],\ldots$, $\Lambda_\beta\subseteq [1,u_\beta]$ such that $|\Gamma|+|\Lambda_1|+\ldots+|\Lambda_\beta|=d.$ 
Let $|\Lambda_1|=p$. We may assume that $\Lambda_1=[1,p]$ after rearrangement of $\{a_{11},\ldots, a_{1u_1}\}$.

{\it{Claim 3}.} $b_{\Gamma^\prime}^{[1,l]\Lambda_2^\prime,\ldots \Lambda_\beta^\prime}=0$ for all $\Gamma^\prime \subseteq \Gamma$, $\Lambda_k^\prime\subseteq \Lambda_k,~2\leq k\leq \beta$ with 
$|\Gamma^\prime|+|\Lambda_2^\prime|+\ldots+|\Lambda_\beta^\prime|=d-l$ 
and for all $p\leq l\leq d$ 
after a suitable change of representations $c_{\Gamma^\prime}^{[1,l]\setminus\{i\}\Lambda_2^\prime\ldots \Lambda_\beta^\prime}$ for $i\in [1,l].$

\begin{proof}[Proof of Claim 3] We use induction on $p$. If $p=d$, then $l=d$ and $\Gamma^\prime=\Lambda_2^\prime=\ldots=\Lambda_\beta^\prime=\phi$. By Lemma \ref{Lemma-inside-main-proof}, we
have a choice of representatives $c_{\Gamma^\prime}^{[1,d]\setminus\{i\}\Lambda_2^\prime\ldots \Lambda_\beta^\prime}$ in the expression \eqref{eq-main-proof-4} such that  $b_{\Gamma^\prime}^{[1,d]\Lambda_2^\prime,\ldots \Lambda_\beta^\prime}=0$. 
Suppose $0\leq p<d$ and the assertion holds for $p+1$. Then it is enough to consider the case when $l=p$. Let  
$\Gamma^\prime \subseteq \Gamma$, $\Lambda_k^\prime\subseteq \Lambda_k,~2\leq k\leq \beta$ with $|\Gamma^\prime|+|\Lambda_2^\prime|+\ldots+|\Lambda_\beta^\prime|=d-l$ be fixed. 
Put $\Lambda_1^\prime=[1,p+1]$, then by induction hypothesis we have that 
 \[ b_{\Gamma^{\prime\prime}}^{\Lambda_1^\prime \Lambda_2^\prime\ldots \Lambda_\beta^\prime}=0 \text{ whenever } \Gamma^{\prime\prime}\subseteq \Gamma^\prime \text{ with } 
 |\Gamma^{\prime\prime}|=|\Gamma^\prime |-1\]
  and 
\[ 
b_{\Gamma^\prime}^{\Lambda_1^\prime \Lambda_2^\prime\ldots \Lambda_{k-1}^\prime \Lambda_k^{\prime\prime} \Lambda_{k+1}^\prime\ldots \Lambda_\beta^\prime}=0 \text{ whenever } \Lambda_k^{\prime\prime}\subseteq \Lambda_k^\prime \text{ with } 
|\Lambda_k^{\prime\prime}|=|\Lambda_k^\prime|-1 \text{ for } 
2\leq k\leq \beta
  \]
  up to change of representations. Then, by Lemma \ref{Lemma-inside-main-proof}, $b_{\Gamma^\prime}^{[1,l]\Lambda_2^\prime,\ldots \Lambda_\beta^\prime}=0$ after suitable 
  change of representations $c_{\Gamma^\prime}^{[1,l]\setminus\{i\}\Lambda_2^\prime\ldots \Lambda_\beta^\prime}$ for $i\in [1,l].$
This complete the proof of Claim 3.
  \end{proof}
 Now, put $l=p$, $\Gamma^\prime=\Gamma$ and $\Lambda_k^\prime=\Lambda_k$ for $2\leq k\leq \beta$ in Claim 3 to complete the proof of Theorem \ref{main-theorem-last-section}.
 \end{proof}
 
{\textbf{Application.}} In a Noetherian local ring $A$, the Hilbert
coefficients of an $\m$-primary ideal $I$ satisfies the following relation, see \cite[Theorem 2.4]{rossi-valla}, 
\begin{align}\label{eqn-app}
e_1(I)-e_1(Q)\geq 2(e_0(I)-\ell_A(A/I))-\ell_A(I/I^2+Q) 
\end{align}
where $Q\subseteq I$ is a minimal reduction of $I$. If $A$ is Cohen-Macaulay, then the above  inequality follows from the famous bounds 
of Huckaba-Marley \cite{HM94}. Corso in \cite{corso} conjectured that if $A$ is Buchsbaum and equality holds in \eqref{eqn-app} for $I=\m$, then $G(\m)$ is Buchsbaum.  
 Corso's conjecture holds more generally for $\m$-primary ideals, which is proved in \cite{ozeki12} and \cite{Ozeki13}. An alternative proof can be 
 given using Theorem \ref{theorem-intro-2}.  In fact, one can show that if $A$ is Buchsbaum and equality holds in \eqref{eqn-app} 
 then $\mathds{I}(G(I))=\mathds{I}(A)$, see \cite[Theorem 1.1]{Ozeki13}. Then, by Theorem \ref{theorem-intro-2}, we conclude that $G(I)$ is Buchsbaum.  
 Further discussions concerning Corso's conjecture is 
 subject of a subsequent paper.

 \addcontentsline{toc}{section}{References}

\end{document}